\newtheorem{theorem}{Theorem}[section]
\newtheorem{proposition}[theorem]{Proposition}
\newtheorem{lemma}[theorem]{Lemma}
\newtheorem{remark}[theorem]{Remark}
\title{Edge detection with polynomial frames on the sphere}
\date{}
\author{Frederic Schoppert \\ \href{mailto:f.schoppert@uni-luebeck.de}{f.schoppert@uni-luebeck.de} }
\affil{Institute of Mathematics, University of L{\"u}beck, Ratzeburger Allee 160, 23562 L{\"u}beck, Germany}
\begin{document}

\maketitle

\begin{abstract}
In a recent article, we have shown that a variety of localized polynomial frames, including isotropic as well as directional systems, are suitable for detecting jump discontinuities along circles on the sphere. More precisely, such edges can be identified in terms of their position and orientation by the asymptotic decay of the frame coefficients in an arbitrary small neighborhood. In this paper, we will extend these results to discontinuities which lie along general smooth curves. In particular, we prove upper and lower estimates for the frame coefficients when the analysis function is concentrated in the vicinity of such a singularity. The estimates are given in an asymptotic sense, with respect to some dilation parameter, and they hold uniformly in a neighborhood of the smooth curve segment under consideration.
\end{abstract}

\paragraph{Keywords.}
Detection of singularities, analysis of edges, wavelets on the sphere, polynomial frames, directional wavelets.

\paragraph{Mathematics Subject Classification.} 42C15, 42C40, 65T60

\section{Introduction}\label{sec1}
The classical orthonormal basis of spherical harmonics $Y_n^k$ plays a fundamental role in signal analysis on the sphere $\mathbb{S}^2$. It allows for a unique representation
\begin{equation*}
f = \sum_{n=0}^\infty\sum_{k=-n}^n \langle f, Y_n^k \rangle Y_n^k
\end{equation*}
of any signal $f \in L^2(\mathbb{S}^2)$ with respect to this basis and $f$ is completely determined by its Fourier coefficients $\langle f, Y_n^k \rangle$. However, in general it is not obvious how one can extract some desired information about $f$ from these inner products. Here, the study of localized features proves to be particularly difficult, since the spherical harmonics are not concentrated in space and, thus, the coefficients $\langle f, Y_n^k \rangle $ are global quantities. One way to tackle this problem is to consider frames consisting of localized functions. Indeed, if $(\Psi_j)_{j\in J}$ is such a system, the frame coefficients $\langle f, \Psi_j\rangle$, which, again, contain all the information about the signal $f$, give access to a position-based analysis. Furthermore, if the $\Psi_j$ are polynomials, then the inner products $\langle f, \Psi_j\rangle$ can be computed as linear combinations of the Fourier coefficients $\langle f, Y_n^k \rangle$. In this paper, we prove that, for a wide class of localized polynomial frames on the sphere, the frame coefficients display accurately the positions and orientations of jump discontinuities which lie along sufficiently smooth curves.

In the one-dimensional setting, the problem of detecting singularities with localized polynomial frames has been well investigated (see e.g.\ \citep{ bib7, bib12, bib13, bib14}). As a more recent development, the bivariate case has also received a lot of attention (see e.g.\ \citep{bib5, bib6, bib8,bib17, bib18}). Here, the problem is more complex since singularities can lie along curves and, consequently, directional frames are needed to identify such features both in terms of their position and orientation. On the sphere $\mathbb{S}^2$, this matter was first investigated in the previous article \citep{bib25}, where the results are limited to singularities on circles. In this present paper, we will investigate the detection of jump discontinuities that lie along more general curves. Our approach is based on the study of frame coefficients with respect to indicator functions $\mathbf{1}_A$, where $A\subset \mathbb{S}^2$ is a simply connected region with a sufficiently smooth boundary $\partial A$. Here, we take advantage of the fact that, locally, smooth curves are approximated sufficiently well by suitable circles. This allows us to transfer some of our previous results to this more general setting. Specifically, for the class of directional wavelets $\Psi_{\scriptscriptstyle K}^{\scriptscriptstyle N}$, introduced and studied in \citep{bib9, bib10, bib21}, we show that the frame coefficients $\langle \mathbf{1}_A, \mathcal{D}(\mathbf{x}, \mathbf{r} ) \Psi_{\scriptscriptstyle K}^{\scriptscriptstyle N} \rangle$ peak  when the distance $d(\mathbf{x}, \partial A)$ between the center $\mathbf{x}$ of the rotated wavelet $\mathcal{D}(\mathbf{x}, \mathbf{r} ) \Psi_{\scriptscriptstyle K}^{\scriptscriptstyle N}$ and the boundary is small enough and $\mathcal{D}(\mathbf{x}, \mathbf{r} ) \Psi_{\scriptscriptstyle K}^{\scriptscriptstyle N}$ exhibits roughly the same orientation $\mathbf{r}$ as the closest edge segment. Indeed, in \autoref{thm1} we will show that there exists some $N_0 \in \mathbb{N}$ and a nonempty open interval $I\subset (0, \infty)$ such that
\begin{equation*}
\lvert \langle \mathbf{1}_A, \mathcal{D}(\mathbf{x}, \mathbf{r} ) \Psi_{\scriptscriptstyle K}^{\scriptscriptstyle N} \rangle \rvert \geq c_1>0 \, \quad \text{if } N d(\mathbf{x}, \partial A) \in I,\; N\geq N_0,
\end{equation*}
provided that the orientation $\mathbf{r}$ does not deviate too much from the orientation of the nearest edge. Here, larger values of the dilation parameter $N$ result in a better localization of the corresponding wavelet. Additionally, we derive an upper bound of the form
\begin{equation*}
\lvert \langle \mathbf{1}_A, \mathcal{D}(\mathbf{x}, \mathbf{r}) \Psi_{ \scriptscriptstyle K}^{\scriptscriptstyle N} \rangle \rvert \leq c_2(1+N d( \mathbf{x}, \partial A))^{-\ell},
\end{equation*}
where $\ell$ is some positive integer. The constants $c_1, c_2$ can be chosen independent of $\mathbf{x}, \mathbf{r}$ and $N$, as long as the wavelet is positioned in some neighborhood of the curve segment  under consideration. Another insightful result about the frame coefficients will be presented in \autoref{thm2}, consisting of an asymptotic formula in which the leading term is given explicitly as function depending essentially on the difference in position and orientation between the wavelet and the closest boundary segment. Even though all our findings are formulated in terms of the continuous wavelet transform, the implications for corresponding discretized systems, as discussed for example in \citep{bib10, bib21}, are obvious.

The remainder of this paper is organized as follows. In \autoref{sec2}, we go over some basic notations and give an explicit basis of spherical harmonics. \autoref{sec3} serves as a short introduction to the system of directional wavelets \citep{bib9, bib10, bib21}. Furthermore, we formulate a new localization bound which has a simple proof and slightly improves the bound given in \citep{bib9}. In \autoref{sec4}, we present our main results, which are derived in two primary steps. First, we prove an auxiliary lemma which states that, locally, smooth curves are approximated sufficiently well by suitable circles. Subsequently, we combine said lemma with our previous results in \citep{bib25} to obtain the desired asymptotic estimates. Finally, in \autoref{sec5}, we present some numerical experiments which illustrate our theoretical findings.

\section{Preliminaries}\label{sec2}
We consider the space $\mathbb{R}^3$ equipped with the euclidean norm $\left\| \cdot \right\|_2$, which is induced by the inner product  $\langle \mathbf{x}, \mathbf{y} \rangle_2 = \mathbf{x}^\top \mathbf{y}$ for $\mathbf{x}, \mathbf{y} \in \mathbb{R}^3$. The vectors
\begin{equation*}
\mathbf{e}_1 = \begin{pmatrix}
1 \\
0 \\
0
\end{pmatrix}, \quad \mathbf{e}_2 = \begin{pmatrix}
0 \\
1 \\
0
\end{pmatrix}, \quad \mathbf{e}_3 = \begin{pmatrix}
0 \\
0 \\
1
\end{pmatrix}
\end{equation*}
form the canonical basis of $\mathbb{R}^3$. As usual, we will denote the two-dimensional unit sphere by $\mathbb{S}^2 = \left\{ \mathbf{x}\in \mathbb{R}^3 : \left\| \mathbf{x} \right\|_2 = 1 \right \}$ and refer to $\mathbf{e}_3$ as the north pole of $\mathbb{S}^2$. Each point $\mathbf{x} \in \mathbb{S}^2$ can be identified in terms of its latitude $\theta \in [0, \pi]$ and, not necessarily unique, longitude $\varphi \in [0, 2\pi)$ through
\begin{equation}\label{polarkoord.}
\mathbf{x} (\theta, \varphi) = \begin{pmatrix}
\sin \theta \, \cos \varphi \\
\sin \theta \, \sin \varphi \\
\cos \theta
\end{pmatrix}.
\end{equation}
The geodesic distance between two points $\mathbf{x}, \mathbf{y} \in \mathbb{S}^2$ is given by
\begin{equation*}
d(\mathbf{x}, \mathbf{y})= \arccos\left( \langle \mathbf{x}, \mathbf{y}\rangle_2 \right).
\end{equation*}
It is well known that $d$ is a metric on $\mathbb{S}^2$ and that
\begin{equation}\label{normeq}
\frac{2}{\pi} d(\mathbf{x}, \mathbf{y}) \leq \|\mathbf{x} - \mathbf{y} \|_2 \leq d(\mathbf{x}, \mathbf{y}) \quad \text{for all } \mathbf{x}, \mathbf{y}\in \mathbb{S}^2.
\end{equation}
The set
\begin{equation*}
C(\mathbf{z}, \phi) = \left\{ \mathbf{x}\in \mathbb{S}^2 : d(\mathbf{x}, \mathbf{z}) < \phi  \right\}   
\end{equation*}
is called a spherical cap with center $\mathbf{z} \in \mathbb{S}^2$ and opening angle \(\phi \in \left(0, \pi \right)\). Its boundary
\begin{equation*}
\partial C(\mathbf{z}, \phi) = \left\{  \mathbf{x}\in \mathbb{S}^2 :d( \mathbf{x}, \mathbf{z} ) =  \phi  \right\}   
\end{equation*}
constitutes a circle on the sphere. Furthermore, in the case that $\phi = \pi/2$, we call $\partial C(\mathbf{z}, \pi/2)$ a great circle. Finally, if $\mathbf{x}\in \mathbb{S}^2$ and $A\subset \mathbb{S}^2$ we define
\begin{equation*}
d(\mathbf{x}, A) = \inf_{\mathbf{a}\in A} d(\mathbf{x}, \mathbf{a}).
\end{equation*}

In the following, we consider the Hilbert space $L^2(\mathbb{S}^2)$ of square integrable functions on $\mathbb{S}^2$ with the corresponding inner product
\begin{align*}
\langle f, g \rangle & = \int_{\mathbb{S}^2} f(\mathbf{x})\, \overline{g(\mathbf{x})} \, \mathrm{d}\omega(\mathbf{x}) \\
& =  \int_0^{2\pi} \int_{0}^\pi f(\theta, \varphi) \, \overline{g(\theta, \varphi)} \, \sin \theta \, \mathrm{d}\theta \, \mathrm{d}\varphi \quad \text{for} \; f, g \in L^2(\mathbb{S}^2),
\end{align*}
where $\mathrm{d}\omega(\mathbf{x}) = \sin \theta \, \mathrm{d}\theta \, \mathrm{d}\varphi$ is the usual surface element. A common choice for a polynomial orthonormal basis consists of the functions $Y_n^k \colon \mathbb{S}^2 \rightarrow \mathbb{C}$, where $Y_n^k$ denotes the spherical harmonic of degree $n\in \mathbb{N}_0$ and order $k\in \left\{ -n,-n+1, ..., n \right \}$ defined as
\begin{equation}\label{spherical harmonic}
Y_{n}^k(\theta, \varphi) = (-1)^k \, \sqrt{\frac{2 n + 1}{4 \pi}\frac{(n-k)!}{(n +k)!}}\, P_n^k(\cos\theta) \exp(\mathrm{i} k \varphi).
\end{equation}
The associated Legendre polynomials $P_n^k \colon [-1, 1]\rightarrow \mathbb{R}$ used in \eqref{spherical harmonic} are given by
\begin{equation}\label{assoc. Legendre def}
P_n^k (x) = (1-x^2)^{k/2}\, \frac{\mathrm{d}^{ k}}{\mathrm{d}x^{k}} P_n(x)  \quad \text{for } k \geq 0
\end{equation}
and
\begin{equation*}\label{assozLegendreNegativ}
P_n^{k}(x) = (-1)^k \, \frac{(n+k)!}{(n-k)!} \, P_n^{-k}(x)   \quad \text{for } k < 0,
\end{equation*}
where the Legendre polynomial $P_n \colon [-1,1]  \rightarrow \mathbb{R}$ can be defined via the Rodrigues formula
\begin{equation*}
P_n (x) = \frac{1}{2^n n !} \frac{\mathrm{d}^{n}}{\mathrm{d}x^{n}} \Big( (x^2-1)^n \Big) .
\end{equation*}
If $f$ is a continuous function defined on the sphere, we use the standard notation
\begin{equation*}
\| f \|_\infty = \sup_{\mathbf{x}\in \mathbb{S}^2} \lvert f(\mathbf{x}) \rvert.
\end{equation*}

Functions in $L^2(\mathbb{S}^2)$ can be rotated using elements in the rotation group $SO(3)=\{ \mathbf{R}\in \mathbb{R}^{3\times 3} : \mathbf{R}^{-1}=\mathbf{R}^{\top} \text{ and } \det (\mathbf{R})=1  \}$. In this paper, we will often refer to a parameterization of rotation matrices $\mathbf{R}(\mathbf{x}, \mathbf{r}) \in SO(3)$ in terms of elements in the unit tangent bundle  $UT\mathbb{S}^2 = \{ (\mathbf{x}, \mathbf{r}) : \mathbf{x}\in \mathbb{S}^2, \; \mathbf{r}\in T_\mathbf{x}\mathbb{S}^2 \text{ and } \| \mathbf{r} \|_2 = 1 \}$, where $T_\mathbf{x}\mathbb{S}^2$ denotes the tangent space of $\mathbb{S}^2$ at $\mathbf{x}$. Here, the correspondence is given by the bijection $SO(3)\rightarrow UT\mathbb{S}^2$,
\begin{equation*}
\mathbf{R}\mapsto (\mathbf{R}\mathbf{e}_3, \mathbf{R}\mathbf{e}_1).
\end{equation*}
Hence, $\mathbf{R}(\mathbf{x}, \mathbf{r})$ is the unique rotation matrix mapping $\mathbf{e}_3$ to $\mathbf{x}$ and $\mathbf{e}_1$ to $\mathbf{r}$. In this context, we define the rotation operator $\mathcal{D}(\mathbf{x}, \mathbf{r}) \colon L^2(\mathbb{S}^2)\rightarrow L^2(\mathbb{S}^2)$ by
\begin{equation}\label{rotation operator}
\mathcal{D}(\mathbf{x}, \mathbf{r})f(\mathbf{y}) = f([\mathbf{R}(\mathbf{x}, \mathbf{r})]^{-1}\mathbf{y}) \quad \text{for all } \mathbf{y}\in \mathbb{S}^2.
\end{equation}
In the literature, elements of the rotation group are also commonly represented as matrices $\mathbf{R}(\alpha, \beta, \gamma)$ in terms of the Euler angles $\alpha, \gamma \in [0, 2\pi)$ and $\beta \in [0, \pi]$, where $\mathbf{R}(\alpha, \beta, \gamma) = \mathbf{R}_{\mathbf{e}_3}(\alpha) \mathbf{R}_{\mathbf{e}_2}(\beta) \mathbf{R}_{\mathbf{e}_3}(\gamma)$ and $\mathbf{R}_{\mathbf{e}_i}(\xi)$ is the $3\times 3$ rotation matrix which rotates around the $\mathbf{e}_i$-axis by an angle $\xi$. The rotation operator can then also be given in terms of $\alpha, \beta$ and $\gamma$ through
\begin{equation*}
\mathcal{D}(\alpha, \beta,\gamma)f(\mathbf{y}) = f([\mathbf{R}(\alpha, \beta, \gamma)]^{-1}\mathbf{y}) \quad \text{for all } \mathbf{y}\in \mathbb{S}^2.
\end{equation*}
While for the most part we will use the unit tangent bundle parameterization of $SO(3)$, occasionally we will also refer to the Euler angle representation. A connection between these two approaches is given by
\begin{equation*}
\mathbf{R}(\alpha, \beta, \gamma) = \mathbf{R}(\mathbf{x}(\beta, \alpha), \mathbf{R}(\alpha, \beta, \gamma)\mathbf{e}_1),
\end{equation*}
with $\mathbf{x}(\beta, \alpha)\in \mathbb{S}^2$ and $\mathbf{R}(\alpha, \beta, \gamma)\mathbf{e}_1 \in T_{\mathbf{x}(\beta, \alpha)}\mathbb{S}^2$.

Finally, for $\mathbf{x}\in \mathbb{S}^2$ we define $d_\mathbf{x}(\mathbf{r}, \mathbf{s}) \in [0, 2\pi)$ to be the order dependent angle between two vectors $\mathbf{r}, \mathbf{s}\in T_\mathbf{x}\mathbb{S}^2$, i.e., $\mathbf{r} = \mathbf{R}_\mathbf{x}(d_\mathbf{x}(\mathbf{r}, \mathbf{s}))\mathbf{s}$, where $\mathbf{R}_\mathbf{x}(d_\mathbf{x}(\mathbf{r}, \mathbf{s}))$ is the $3\times 3$ matrix performing a counterclockwise rotation around the $\mathbf{x}$ axis by the angle $d_\mathbf{x}(\mathbf{r}, \mathbf{s})$.

\section{Localized polynomial frames on the sphere}\label{sec3}
In the series of papers \citep{ bib9, bib10, bib21} the authors introduced and investigated systems of so called directional wavelets on the sphere, which constitute a wide class of isotropic and non-isotropic polynomial frames. These frame functions are highly localized in space and therefore provide a powerful tool for position-based analyses. Following closely the construction and notation in \citep{bib9}, we define the directional wavelets in terms of their Fourier coefficients by
\begin{equation}\label{directional wavelets}
\Psi_{\scriptscriptstyle K}^{\scriptscriptstyle N} = \sum_{n=0}^\infty \sum_{k=-n}^n \sqrt{\frac{2n+1}{8\pi^2}} \, \kappa\!\left( \frac{n}{N} \right) \zeta_{n, k}^{\scriptscriptstyle K} \, Y_n^k,
\end{equation}
where $\kappa$ is a window function satisfying $\kappa \in C^{q}([0, \infty))$ for some $q \in \mathbb{N}$ and $\emptyset \neq \text{supp}(\kappa)\subset [1/2, 2]$. The spatial localization of $\Psi_{\scriptscriptstyle K}^{\scriptscriptstyle N}$ is controlled by the dilation parameter $N\in \mathbb{N}$, where larger values correspond to higher concentrated wavelet functions. As suggested in \citep{bib9}, for $K \in \mathbb{N}$ we define the directionality component by
\begin{equation*}
\zeta_{n, k}^{\scriptscriptstyle K} = \begin{cases}  \displaystyle \eta\, \nu \, \sqrt{\frac{1}{2^p} \binom{p}{\frac{p-k}{2}}}, \quad  & \text{if}\; \lvert k  \rvert \leq K-1, \\
0,  & \text{if}\;\lvert k  \rvert \geq K,
\end{cases}
\end{equation*}
where
\begin{equation*}
\eta = \begin{cases}
1, \quad & \text{if}\; K -1 \; \text{even},\\
\mathrm{i},   & \text{if}\; K-1 \; \text{odd},    
\end{cases} \quad  \quad \quad
\nu = \begin{cases}
1, \quad & \text{if}\; K+k \; \text{odd},\\
0, & \text{if}\; K+k \;  \text{even}  
\end{cases} \quad 
\end{equation*}
and
\begin{equation*}
p =\begin{cases}
\min (K-1, n-1), \quad & \text{if}\; K+n \; \text{even},\\
\min (K-1, n), & \text{if}\; K+n \; \text{odd}.
\end{cases}
\end{equation*}
This results in $\langle \Psi_{ \scriptscriptstyle K}^{\scriptscriptstyle N}, Y_n^k\rangle = 0$ for $\lvert k \rvert \geq K$, i.e., the directional wavelets are band-limited with respect to the azimuthal frequency. Furthermore, $\Psi_{\scriptscriptstyle K}^{\scriptscriptstyle N}$ becomes increasingly more directional as $K$ gets large. Since $\zeta_{n, k}^{\scriptscriptstyle K}$ does not depend on $n$ for $n \geq K$, we will also write $ \zeta_{k}^{\scriptscriptstyle K}$ instead of $\zeta_{n, k}^{\scriptscriptstyle K}$ in this case. We will see that the function
\begin{equation*}
\chi_{\scriptscriptstyle K}(\gamma)= \overline{\eta} \sum_{k = 1-K}^{K-1}\zeta_{k}^{\scriptscriptstyle K} \exp(\mathrm{i} k \gamma ),
\end{equation*}
which is essentially the inverse Fourier transform of the directionality component, characterizes the directional sensitivity of $\Psi_{\scriptscriptstyle K}^{\scriptscriptstyle N}$ with respect to detecting discontinuities. Indeed, in the special case where the singularities lie along circles this was previously observed in \citep[Theorem~3.1]{bib25}. The functions $\chi_{\scriptscriptstyle K}$ are visualized in \autoref{Fig1} for different values of $K$.

\begin{figure}[t]
\center
\includegraphics[width=1\textwidth]{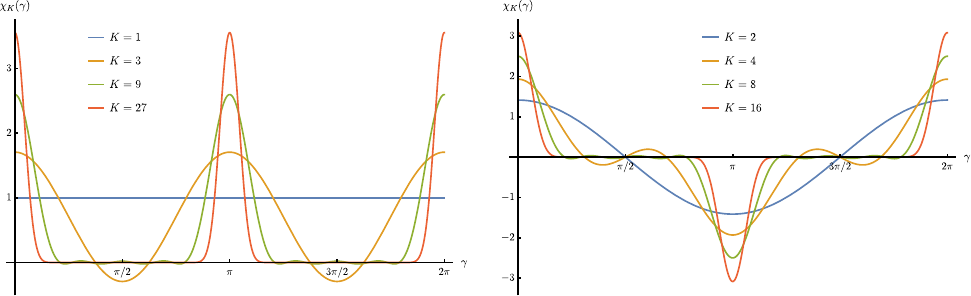}
\caption{Directionality functions $\chi_{\scriptscriptstyle K}$ for odd (left) and even (right) values of $K$}\label{Fig1}
\end{figure}

The localization bound for directional wavelets derived in \citep{bib9} plays a central role in the proof of our main results. Therefore, we would like to reproduce the corresponding statement here once again, using our slightly different notation. Moreover, we give an alternative simple proof, which is based on \cite[Theorem~2.6.7]{bib3}, and our result improves the existing bound stated in \citep{bib9}. Here, and in the remainder of the article, all occurring constants $c, c_0, c_1, ...$ only depend on the indicated parameters and their values might change with each appearance.

\begin{proposition}
Let $\Psi_{\scriptscriptstyle K}^{\scriptscriptstyle N}$ be the directional wavelet defined in \eqref{directional wavelets} with $\kappa \in C^{3q-1}([0, \infty))$ for some $q \in \mathbb{N}$. 
Then there exists a constant $c = c(\kappa, K, q)>0$ such that
\begin{equation}\label{wavelets bound 1}
\lvert \Psi_{\scriptscriptstyle K}^{\scriptscriptstyle N}(\theta, \varphi) \rvert \leq \frac{c N^{2}}{(1+N \theta)^{q}} \sum_{k=0}^{K-1}(N \sin \theta)^k \quad \quad  \text{for all } \theta \in [0, \pi], \; \varphi \in [0, 2\pi).
\end{equation}
In particular,
\begin{equation}\label{wavelets bound 2}
\lvert \Psi_{\scriptscriptstyle K}^{\scriptscriptstyle N}(\theta, \varphi) \rvert \leq \frac{c N^{2}}{(1+N \theta)^{q+1-K}} \quad \quad \text{for all } \theta \in [0, \pi], \; \varphi \in [0, 2\pi).
\end{equation}
\end{proposition}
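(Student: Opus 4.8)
The plan is to reduce both bounds to a single known localization estimate for highly localized ultraspherical (zonal) kernels, namely \cite[Theorem~2.6.7]{bib3}, by splitting $\Psi_K^N$ into its azimuthal components and, in each, passing from associated Legendre functions to Gegenbauer polynomials. Since $\operatorname{supp}(\kappa)\subset[1/2,2]$, only degrees $n$ with $N/2\le n\le 2N$ contribute; assuming $N\ge 4K$ (smaller $N$ being absorbed into the constant) we then have $n\ge K$ throughout, so $\zeta_{n,k}^{\scriptscriptstyle K}=\zeta_k^{\scriptscriptstyle K}$, and \eqref{spherical harmonic} gives
\[
\Psi_K^N(\theta,\varphi)=\sum_{k=1-K}^{K-1}(-1)^k\zeta_k^{\scriptscriptstyle K}e^{\mathrm{i}k\varphi}\,T_k(\theta),\qquad T_k(\theta)=\sum_{n}\frac{2n+1}{\sqrt{32\pi^3}}\,\kappa\!\Big(\tfrac nN\Big)\sqrt{\tfrac{(n-k)!}{(n+k)!}}\,P_n^k(\cos\theta).
\]
By the symmetry relations for $P_n^k$ and $|\zeta_{-k}^{\scriptscriptstyle K}|=|\zeta_k^{\scriptscriptstyle K}|$ it suffices to estimate $|T_k(\theta)|$ for $0\le k\le K-1$ and then sum.

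For each such $k$ I would use \eqref{assoc. Legendre def} together with the classical identity $\frac{\mathrm{d}^k}{\mathrm{d}x^k}P_n=\frac{(2k)!}{2^kk!}\,C_{n-k}^{(k+1/2)}$ for the ultraspherical polynomials $C_m^{(\lambda)}$ to write $P_n^k(\cos\theta)=\frac{(2k)!}{2^kk!}\sin^k\theta\,C_{n-k}^{(k+1/2)}(\cos\theta)$. Substituting $m=n-k$, putting $\lambda=k+\tfrac12$ and writing $2n+1=2\lambda\cdot\frac{m+\lambda}{\lambda}$, this turns $T_k$ into
\[
T_k(\theta)=c_k\,N^{-k}\sin^k\theta\sum_{m\ge0}b_{k,N}\!\Big(\tfrac mN\Big)\,\frac{m+\lambda}{\lambda}\,C_m^{(\lambda)}(\cos\theta),\qquad b_{k,N}(t)=\kappa\!\Big(t+\tfrac kN\Big)\Big(\textstyle\prod_{j=1}^{2k}\big(t+\tfrac jN\big)\Big)^{-1/2},
\]
where the factorial ratio $\sqrt{m!/(m+2k)!}=\big(\prod_{j=1}^{2k}(m+j)\big)^{-1/2}$ has been rewritten as $N^{-k}$ times $\big(\prod_{j=1}^{2k}(t+j/N)\big)^{-1/2}$, which, since only $t=m/N$ bounded away from $0$ matters, is real-analytic in $t$ with all derivatives bounded uniformly in $N$. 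The key point is that $b_{k,N}$ is supported in a fixed compact subinterval of $(0,\infty)$, lies in $C^{3q-1}$ because $\kappa$ does, and has $\|b_{k,N}\|_{C^{3q-1}}$ bounded uniformly in $N$; hence the inner sum is exactly a highly localized zonal/ultraspherical kernel (parameter $\lambda=k+\tfrac12$, equivalently a zonal kernel on $\mathbb S^{2k+3}$) with a multiplier of precisely the regularity that \cite[Theorem~2.6.7]{bib3} requires to produce decay of order $q$. That theorem then yields $\big|\sum_{m}b_{k,N}(m/N)\tfrac{m+\lambda}{\lambda}C_m^{(\lambda)}(\cos\theta)\big|\le c\,N^{2\lambda+1}(1+N\theta)^{-q}=c\,N^{2k+2}(1+N\theta)^{-q}$, whence $|T_k(\theta)|\le c\,N^2(N\sin\theta)^k(1+N\theta)^{-q}$. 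Summing over $|k|\le K-1$ (with $|\zeta_k^{\scriptscriptstyle K}|\le c(K)$) gives \eqref{wavelets bound 1}, and \eqref{wavelets bound 2} follows at once since $\sin\theta\le\theta$ forces $\sum_{k=0}^{K-1}(N\sin\theta)^k\le K(1+N\theta)^{K-1}$.

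I expect the main obstacle to be the bookkeeping around $b_{k,N}$: checking that, after extracting $\sin^k\theta$, reindexing by $m=n-k$ and absorbing $\sqrt{(n-k)!/(n+k)!}$, the residual sum is genuinely of the exact form covered by \cite[Theorem~2.6.7]{bib3}, and --- crucially --- that the $C^{3q-1}$-norm of the multiplier $b_{k,N}$ (which carries an $N$-dependent shift of $\kappa$ and the $N$-dependent smooth prefactor $\big(\prod_{j=1}^{2k}(t+j/N)\big)^{-1/2}$) stays bounded as $N\to\infty$, so that the constant in the final estimate does not degenerate; tracking the exact powers of $N$ and the ultraspherical parameter $\lambda=k+\tfrac12$ through the reduction is the other delicate point. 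Should the cited theorem only be available in the Legendre form on $\mathbb S^2$, one would instead keep the derivative form $T_k(\theta)=c\sin^k\theta\,\frac{\mathrm{d}^k}{\mathrm{d}x^k}\big|_{x=\cos\theta}\!\big(\sum_n(2n+1)b_{k,N}(n/N)P_n(x)\big)$ and reduce the $k$ derivatives to Legendre kernels with shifted, still uniformly smooth, symbols via the telescoping identity $P_n'(x)=\sum_{j<n,\ j\equiv n-1\,(\mathrm{mod}\ 2)}(2j+1)P_j(x)$.
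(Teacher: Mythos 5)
Your proof is correct and shares the paper's skeleton: decompose $\Psi_{\scriptscriptstyle K}^{\scriptscriptstyle N}$ into azimuthal components for $N$ large enough that $\zeta_{n,k}^{\scriptscriptstyle K}=\zeta_k^{\scriptscriptstyle K}$, extract the factor $(\sin\theta)^k$, and reduce the remaining radial sum to the localized-kernel estimate \cite[Theorem~2.6.7]{bib3}; the passage from \eqref{wavelets bound 1} to \eqref{wavelets bound 2} via $\sin\theta\le\theta$ is also the same (your one-line version even avoids the paper's case split at $\theta=\pi/2$). The one genuine divergence is the treatment of $\sqrt{(n-k)!/(n+k)!}$. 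The paper expands it asymptotically as $n^{-k}\bigl(\sum_{j=0}^{p-1}c_jn^{-j}+\mathcal{O}(n^{-p})\bigr)$ with $p=k+2+q$, absorbs each power $(n/N)^{-k-j}$ into a \emph{fixed}, $N$-independent multiplier $h(t)=\kappa(t)t^{-k-j}$, and kills the remainder with the crude Markov bound $\lvert P_n^{(k)}(\cos\theta)\rvert\le c_k n^{2k}$; it then applies the cited theorem directly in its derivative form to $\sum_n\frac{2n+1}{4\pi}h(n/N)P_n^{(k)}(\cos\theta)$. You instead absorb the factorial ratio exactly into the $N$-dependent symbol $b_{k,N}$ and pass to Gegenbauer polynomials $C_{n-k}^{(k+1/2)}$ with shifted index, which eliminates the expansion, the choice of $p$, and the remainder estimate altogether. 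The price is exactly the point you flag: the constant in \cite[Theorem~2.6.7]{bib3} must depend only on the support and on finitely many derivative norms of the multiplier, since $b_{k,N}$ varies with $N$. That is indeed how the constant there behaves, and your checks that $\operatorname{supp}(b_{k,N})\subset[1/4,2]$ and $\lVert b_{k,N}\rVert_{C^{3q-1}}\le c(\kappa,K,q)$ uniformly for $N\ge 4K$ (with small $N$ absorbed into the constant, as in the paper's implicit restriction to $N\ge 2K$) are precisely what is needed, so the argument closes; the paper's detour through fixed multipliers exists only to avoid making this uniformity explicit.
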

\begin{proof}
For $N\geq 2K$, the occurring directionality components $\zeta_{n, k}^{\scriptscriptstyle K}$ in \eqref{directional wavelets} are independent of $n$, i.e., $\zeta_{n, k}^{\scriptscriptstyle K} = \zeta_k^{\scriptscriptstyle K}$. By using the symmetry $Y_n^{-k} = (-1)^k\overline{Y_n^k}$, as well as \eqref{spherical harmonic} and \eqref{assoc. Legendre def}, we obtain
\begin{align*}
\Psi_{\scriptscriptstyle K}^{\scriptscriptstyle N}(\theta, \varphi) = \frac{1}{\sqrt{2\pi}} & \sum_{k=0}^{K-1} \left( \zeta_{ k}^{\scriptscriptstyle K} \exp(\mathrm{i}k(\varphi + \pi)) + (1-\delta_{k, 0})\zeta_{-k}^{\scriptscriptstyle K} \exp(-\mathrm{i}k\varphi) \right)(\sin \theta)^k \\
& \quad \quad \times \sum_{n=0}^\infty\frac{2n+1}{4\pi}\kappa\!\left(\frac{n}{N}\right)\sqrt{\frac{(n-k)!}{(n+k)!}} \,   P_n^{(k)}(\cos \theta),
\end{align*}
where
\begin{equation*}
\delta_{k, j} = \begin{cases}
1, \quad &\text{if } k=j,\\
0, &\text{if } k \neq j.
\end{cases}
\end{equation*}
In the following, we will fix $k$ and derive an upper bound for the above inner sum. For any $p \in \mathbb{N}$ there are constants $c_0, c_1, ..., c_{p-1}$ such that
\begin{equation*}
\sqrt{\frac{(n-k)!}{(n+k)!}} = n^{-k} \left( \sum_{j=0}^{p-1}c_j n^{-j} + \mathcal{O}(n^{-p}) \right).
\end{equation*}
Thus, we have
\begin{align*}
\sum_{n=0}^\infty & \frac{2n+1}{4\pi}\kappa\!\left(\frac{n}{N}\right)\sqrt{\frac{(n-k)!}{(n+k)!}} \, P_n^{(k)}(\cos \theta)\\
& \quad\quad = \sum_{j=0}^{p-1}c_j \sum_{n=0}^\infty\frac{2n+1}{4\pi}\kappa\!\left(\frac{n}{N}\right)n^{-k-j} \,  P_n^{(k)}(\cos \theta) + R_N(\cos \theta),
\end{align*}
where
\begin{equation*}
\lvert R_N(\cos \theta) \rvert \leq c_p \sum_{n=0}^\infty \frac{2n+1}{4\pi} \kappa\!\left(\frac{n}{N} \right) n^{-k-p} \,  \bigg\lvert  P_n^{(k)}(\cos \theta)\bigg\rvert.
\end{equation*}
The classical Markov-inequality for algebraic polynomials yields
\begin{equation*}
 \bigg\lvert   P_n^{(k)}(\cos \theta)\bigg\rvert \leq c_k \,n^{2k} \sup_{x \in [-1,1]} \lvert P_n(x) \rvert = c_k\, n^{2k}
\end{equation*}
and consequently $\lvert R_N(\cos \theta)\rvert \leq c_p N^{k+2-p}$. It will be sufficient to choose $p=k+2+q$. Additionally,
\begin{align*}
&\sum_{n=0}^\infty\frac{2n+1}{4\pi}\kappa\!\left(\frac{n}{N}\right)n^{-k-j} \, P_n^{(k)}(\cos \theta) \\
& \quad \quad \quad = N^{-k-j}  \sum_{n=0}^\infty\frac{2n+1}{4\pi}\, \kappa\!\left(\frac{n}{N}\right) \!\left(\frac{n}{N}\right)^{\!-k-j} \, P_n^{(k)}(\cos \theta) \\
& \quad \quad \quad = N^{-k-j} \sum_{n=0}^\infty\frac{2n+1}{4\pi}\, h\!\left(\frac{n}{N}\right)  P_n^{(k)}(\cos \theta)
\end{align*}
with $h(t) = \kappa(t)t^{-k-j}$ and $h\in C^{3q-1}([0, \infty))$. Now, we can apply \cite[Theorem~2.6.7]{bib3} and obtain
\begin{equation*}
\left\vert \sum_{n=0}^\infty\frac{2n+1}{4\pi}\, h\!\left(\frac{n}{N}\right)  P_n^{(k)}(\cos \theta)\right\vert\leq \frac{c_q N^{2+2k}}{(1+N \theta)^q}.
\end{equation*}
This completes the proof of the estimate \eqref{wavelets bound 1}. The inequality in \eqref{wavelets bound 2} is now a simple consequence. Indeed, by \eqref{wavelets bound 1} it suffices to show that
\begin{equation*}
\frac{ (\sin \theta)^k N^{k}}{(1+N \theta)^{K-1}} \leq c, \quad k=0, 1, ..., K-1.
\end{equation*}
This, however, is obvious in both cases $\theta\leq \pi/2$, where we have $\sin \theta \leq \theta$, and $\theta> \pi/2$, where $(1+N\theta) \geq N$.
\end{proof}

\noindent If $K=1$, the corresponding wavelets
\begin{equation}\label{wavelelets K=1}
\Psi_{\scriptscriptstyle 1}^{\scriptscriptstyle N}(\theta, \varphi) = \frac{1}{\sqrt{2\pi}}\sum_{n=0}^\infty \frac{2n+1}{4 \pi} \, \kappa\!\left( \frac{n}{N} \right) P_n(\cos \theta)
\end{equation}
are isotropic and they can be interpreted as univariate polynomials on the interval $[-1,1]$. Such systems are characterized by their simple structure and have been well investigated in the literature (see e.g. \citep{bib3, bib11,bib14, bib27, bib15, bib16}). In this case, the upper bound \eqref{wavelets bound 2} can be derived under less strict smoothness conditions on the window function $\kappa$. A proof of this fact can be found in \cite[Theorem~1]{bib16}. In our notation, this statement reads as follows.
\begin{proposition}
Let $\Psi_{\scriptscriptstyle 1}^{\scriptscriptstyle N}$ be the directional wavelet defined in \eqref{wavelelets K=1} with $\kappa \in C^{q}([0, \infty))$ for some $q \in \mathbb{N}$ and  $\text{supp}(\kappa)\subset [1/2, 2]$. Then there exists some constant $c = c(\kappa, q)>0$ such that
\begin{equation*}\label{boundK=1}
\lvert \Psi_{\scriptscriptstyle 1}^{\scriptscriptstyle N}(\theta, \varphi) \rvert \leq \frac{c N^{2}}{(1+N \theta)^q} \quad \text{for all } \theta \in [0, \pi], \; \varphi \in [0, 2\pi).
\end{equation*}
\end{proposition}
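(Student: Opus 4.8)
The plan is to reduce the estimate to a one-dimensional statement about trigonometric sums with smooth, compactly supported coefficient sequences, via the Mehler--Dirichlet representation $P_n(\cos\theta)=\frac{\sqrt2}{\pi}\int_0^\theta(\cos t-\cos\theta)^{-1/2}\cos((n+\tfrac12)t)\,\mathrm dt$, valid for $\theta\in(0,\pi)$. The one-dimensional ingredient I would isolate first is the elementary decay bound: if $h\in C^q([0,\infty))$ with $\operatorname{supp}(h)\subset[1/2,2]$, then for every $u\in\mathbb R$,
\[
\Big\lvert\sum_{n\ge0}h\!\big(\tfrac nN\big)e^{\mathrm i nu}\Big\rvert\le\frac{c(q)\,\lVert h\rVert_{C^q}\,N}{\big(1+N\lVert u\rVert\big)^{q}},
\]
where $\lVert u\rVert=\operatorname{dist}(u,2\pi\mathbb Z)$. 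This is proved by interpolating between the trivial bound $cN\lVert h\rVert_\infty$ (the sum has only $\mathcal O(N)$ nonzero terms) and the bound obtained from $q$-fold summation by parts: writing the sum as $\sum_{j\in\mathbb Z}a_je^{\mathrm iju}$ with the finitely supported sequence $a_j=h((\lfloor N/2\rfloor+j)/N)$, and using that $j\mapsto e^{\mathrm iju}/(e^{\mathrm iu}-1)$ is a discrete antiderivative of $j\mapsto e^{\mathrm iju}$, one obtains $(-1)^q(e^{\mathrm iu}-1)^{-q}\sum_j(\Delta^qa_j)e^{\mathrm i(j+q)u}$, and since $\lvert\Delta^qa_j\rvert\le N^{-q}\lVert h^{(q)}\rVert_\infty$ (again $\mathcal O(N)$ nonzero terms) and $\lvert e^{\mathrm iu}-1\rvert\ge\tfrac2\pi\lVert u\rVert$, this is $\le cN^{1-q}\lVert u\rVert^{-q}\lVert h^{(q)}\rVert_\infty$. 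Applied with $h(x)=x\kappa(x)$, $h=\kappa$, and $h(x)=x^{-r}\kappa(x)$, and using $2n+1=2N(n/N)+1$, this controls $\sum_n(2n+1)\kappa(n/N)e^{\mathrm inu}$ by $cN^2(1+N\lVert u\rVert)^{-q}$ and $\sum_n(n+\tfrac12)^{-r}\kappa(n/N)e^{\mathrm inu}$ by $cN^{1-r}(1+N\lVert u\rVert)^{-q}$.

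Since $\Psi_{\scriptscriptstyle 1}^{\scriptscriptstyle N}$ is independent of $\varphi$, and since for $N\theta\le1$ the claim follows from the crude bound $\lvert\Psi_{\scriptscriptstyle 1}^{\scriptscriptstyle N}(\theta)\rvert\le\sum_n\tfrac{2n+1}{4\pi\sqrt{2\pi}}\lvert\kappa(n/N)\rvert\le cN^2$ together with $(1+N\theta)^{-q}\ge2^{-q}$, it remains to treat $N\theta\ge1$. Inserting Mehler--Dirichlet into \eqref{wavelelets K=1} and interchanging the (finite) sum with the integral, $\Psi_{\scriptscriptstyle 1}^{\scriptscriptstyle N}(\theta)$ becomes a constant multiple of $\int_0^\theta(\cos t-\cos\theta)^{-1/2}S_N(t)\,\mathrm dt$ with $S_N(t)=\sum_n\tfrac{2n+1}{4\pi}\kappa(n/N)\cos((n+\tfrac12)t)$. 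For $\theta\le\pi/2$ I would split at $\theta/2$. On $[\theta/2,\theta]$ the lemma gives $\lvert S_N(t)\rvert\le cN^2(1+N\theta)^{-q}$, while $\int_{\theta/2}^\theta(\cos t-\cos\theta)^{-1/2}\,\mathrm dt$ is bounded by an absolute constant, which settles that part. On $[0,\theta/2]$ the weight $w(t)=(\cos t-\cos\theta)^{-1/2}$ is smooth and \emph{even}, with $\lvert w^{(r)}(t)\rvert\le c_r\theta^{-1-r}$; writing $\int_0^{\theta/2}w(t)\cos((n+\tfrac12)t)\,\mathrm dt=\tfrac12\operatorname{Re}\int_{-\theta/2}^{\theta/2}w(t)e^{\mathrm i(n+1/2)t}\,\mathrm dt$ and integrating by parts $q$ times, the contributions at $t=0$ cancel because $w$ is even, leaving only boundary terms at $\pm\theta/2$ of the form $(n+\tfrac12)^{-m}w^{(m-1)}(\pm\theta/2)e^{\pm\mathrm i(n+1/2)\theta/2}$, $1\le m\le q$, plus a remainder $(n+\tfrac12)^{-q}\int_{-\theta/2}^{\theta/2}w^{(q)}(t)e^{\mathrm i(n+1/2)t}\,\mathrm dt$. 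Summing these against $\tfrac{2n+1}{4\pi}\kappa(n/N)$, using $\tfrac{2n+1}{(n+1/2)^m}=\tfrac2{(n+1/2)^{m-1}}$ and the two corollaries of the lemma (with $u=\theta/2$), each term is $\le c\,\theta^{-m}N^{2-m}(1+N\theta)^{-q}\le cN^2(1+N\theta)^{-q}$ because $N\theta\ge1$.

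For $\theta\in(\pi/2,\pi)$ the representation centred at $0$ degenerates (its weight integral is no longer uniformly bounded), so I would instead use $P_n(\cos\theta)=(-1)^nP_n(\cos(\pi-\theta))$ and apply Mehler--Dirichlet to $\tilde\theta=\pi-\theta\in(0,\pi/2)$; the inner sum then carries an extra factor $(-1)^n=e^{\mathrm in\pi}$, so its frequency is shifted by $\pi$ and stays at distance $\ge\pi/2$ from $2\pi\mathbb Z$, whence $\lvert S_N\rvert\le cN^{2-q}$ there, which is precisely the required size since $(1+N\theta)^{-q}\asymp N^{-q}$ for $\theta>\pi/2$, and the remaining weight integral over $[0,\tilde\theta]$ is uniformly bounded. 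I expect the genuinely delicate point to be the $[0,\theta/2]$ piece: bounding $S_N$ merely in modulus there yields only the useless estimate $\mathcal O(N/\theta)$, so one must exploit the oscillation of $\cos((n+\tfrac12)t)$ by integrating by parts in $t$, and the observation that makes this clean is the evenness of $(\cos t-\cos\theta)^{-1/2}$, which annihilates all boundary contributions at $t=0$ and confines the rest to $t=\theta/2$, safely away from the singularity at $t=\theta$.
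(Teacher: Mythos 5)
Your argument is correct, and it is a genuinely different route: the paper offers no proof of this proposition at all, but simply cites \cite[Theorem~1]{bib16}, and for the analogous general-$K$ bound it reduces matters to the black-box kernel estimate \cite[Theorem~2.6.7]{bib3}. You instead give a self-contained elementary proof whose only inputs are the Mehler--Dirichlet representation of $P_n(\cos\theta)$ and a standard summation-by-parts bound for $\sum_n h(n/N)e^{\mathrm{i}nu}$; the decomposition of $\int_0^\theta$ at $\theta/2$, the use of the evenness of $(\cos t-\cos\theta)^{-1/2}$ to kill the boundary terms at $t=0$, and the reflection $P_n(\cos\theta)=(-1)^nP_n(\cos(\pi-\theta))$ for $\theta>\pi/2$ are all sound, and the bookkeeping $\theta^{-m}N^{-m}=(N\theta)^{-m}\le 1$ correctly absorbs the weight's derivatives once $N\theta\ge 1$. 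What your approach buys is transparency and independence from the cited literature (at the cost of length); what the citation buys the paper is brevity and, in \cite{bib3}, a statement already formulated for the derivatives $P_n^{(k)}$ needed in the directional case $K>1$ --- your Mehler--Dirichlet route does not extend painlessly to that case. Two small points you should tighten in a write-up: the symbol for $\sum_n(n+\tfrac12)^{-r}\kappa(n/N)e^{\mathrm{i}nu}$ is $h_N(x)=(x+\tfrac{1}{2N})^{-r}\kappa(x)$ rather than $x^{-r}\kappa(x)$, so you need the (easy) remark that its $C^q$-norm on $[1/2,2]$ is bounded uniformly in $N$; and for the final remainder term the inner sum must be bounded uniformly for $t\in[-\theta/2,\theta/2]$, including $t$ near $0$, so there you invoke the trivial bound $cN^{2-q}$ of your lemma rather than its oscillatory corollary at $u=\theta/2$ --- the resulting estimate $c\,\theta^{-q}N^{2-q}=cN^2(N\theta)^{-q}$ still closes the argument since $N\theta\ge 1$.
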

In summary, the directional wavelets $\Psi_{\scriptscriptstyle K}^{\scriptscriptstyle N}$ defined in \eqref{directional wavelets} are localized at the north pole $\mathbf{e}_3$ and get more and more concentrated as the dilation parameter $N$ increases. The directional sensitivity of  $\Psi_{\scriptscriptstyle K}^{\scriptscriptstyle N}$ is determined by $K$, where larger values correspond to wavelets with a higher directionality. Furthermore, motivated by \autoref{Fig2}, we will say that $\Psi_{\scriptscriptstyle K}^{\scriptscriptstyle N}$ points in the direction $\mathbf{e}_1$.

In the series of papers \citep{bib9, bib10, bib21} the authors discussed how one can construct frames by considering rotated versions of the $\Psi_{\scriptscriptstyle K}^{\scriptscriptstyle N}$ and adding a suitable scaling function. In this article, the concept of rotated wavelets also plays a central role and the operator $\mathcal{D}(\mathbf{x}, \mathbf{r})$ defined in \eqref{rotation operator} has an easy interpretation. Indeed, as visualized in \autoref{Fig2}, the function $\mathcal{D}(\mathbf{x}, \mathbf{r})\Psi_{\scriptscriptstyle K}^{\scriptscriptstyle N}$ is localized at $\mathbf{x}$ and points in the direction $\mathbf{r}$.
\begin{figure}
\centering
\includegraphics[width = 0.4\textwidth]{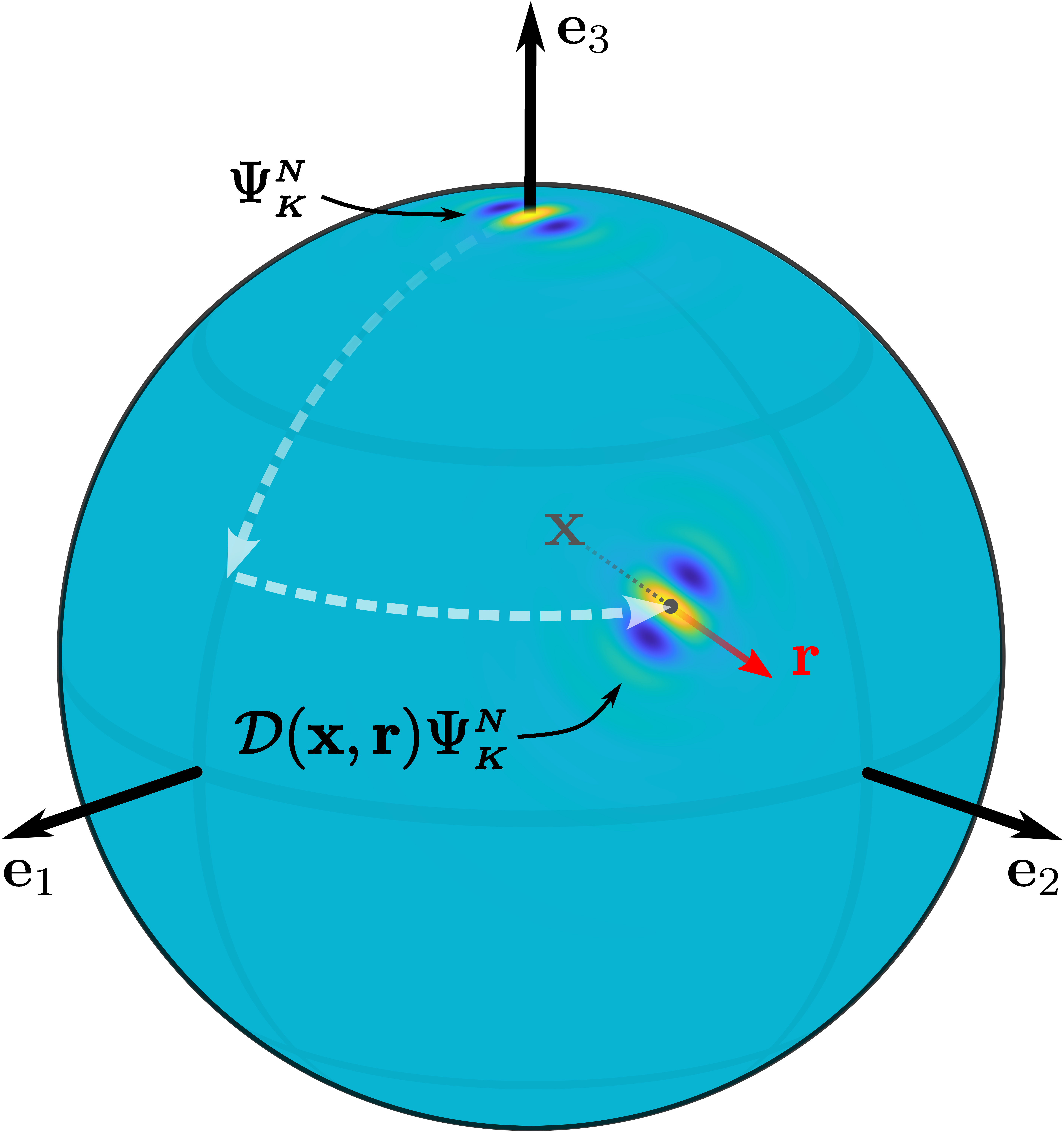}
\caption{Rotation of a localized wavelet function using the operator $\mathcal{D}(\mathbf{x}, \mathbf{r}) $}\label{Fig2}
\end{figure}

\section{Edge detection}\label{sec4}
We begin by describing the situation to which we will refer throughout this section. Let $A \subset \mathbb{S}^2$ be a set whose boundary $\partial A$ is the image of a closed unit-speed curve
\begin{equation*}
\mathbf{v}(t), \quad t \in [0, L],
\end{equation*}
of finite length $L>0$, where $\mathbf{v}$ is injective except for $\mathbf{v}(0) = \mathbf{v}(L)$. We assume that there exists an interval $[a, b]\subset [0, L]$, $a<b$, such that $\mathbf{v}\in C^3([a, b])$. Also, we will assume that $\mathbf{v}$ has a positive orientation, such that for each $p \in [a, b]$ we have
\begin{equation*}
\frac{\mathbf{v}(p)+\varepsilon \mathbf{R}_{\mathbf{v}(p)}(\pi/2)\mathbf{v}'(p) }{\|\mathbf{v}(p)+\varepsilon \mathbf{R}_{\mathbf{v}(p)}(\pi/2)\mathbf{v}'(p) \|_2} \in A,
\end{equation*}
provided that $\varepsilon>0$ is small enough. For $t \in [a, b]$, the curvature $\|\mathbf{v}''(t)\|_2$ of $\mathbf{v}$ at $t$ will play an important part in our calculations. It holds that $\|\mathbf{v}''(t)\|_2\geq 1$, which can be verified as follows. Since $\mathbf{v}$ is a unit speed curve on the sphere, we have $\langle \mathbf{v}(t), \mathbf{v}(t)\rangle_2 = \langle \mathbf{v'}(t), \mathbf{v'}(t)\rangle_2 =1$ for all $t \in [a, b]$. Thus, differentiating $\langle \mathbf{v}(t), \mathbf{v}(t)\rangle_2 \equiv 1$ twice on both sides, we obtain $\langle \mathbf{v}''(t), \mathbf{v}(t) \rangle_2 +1 = 0$. Consequently, the Cauchy-Schwarz inequality yields $1 = \lvert \langle \mathbf{v}''(t), \mathbf{v}(t) \rangle_2\rvert \leq \|\mathbf{v}''(t)\|_2$. We now define
\begin{equation*}
\phi^\ast = \inf_{t \in [a, b]} \phi(t), \quad \phi(t) = \arcsin(\|\mathbf{v}''(t)\|_2^{-1}).
\end{equation*}
Obviously, $\phi^\ast \in (0, \pi/2]$. In the following, we will assume that
\begin{equation}\label{eq1}
\|\mathbf{v}(t_1)- \mathbf{v}(t_2) \|_2  \leq \frac{1-\cos \phi^\ast}{2},  \quad \| \mathbf{v}^{\prime}(t_1) - \mathbf{v}'(t_2)\|_2 \leq \frac{1}{4}-\frac{(1+\cos\phi^\ast)^2}{16}
\end{equation}
for all $t_1, t_2 \in [a, b]$. The latter can of course always be satisfied by choosing the interval $[a, b]$ small enough. If $\delta \in (0, 1]$ is sufficiently small, we write $[a, b]_\delta = [a+\delta, b-\delta]$ and define
\begin{equation}\label{d_delta}
d_\delta = \min \! \left(\frac{\delta}{5} \sin(\phi^\ast/2), \, \inf_{\substack{t \in[a, b]_\delta \\ s\in [0, L]\setminus (a, b) }} d( \mathbf{v}(t) ,\mathbf{v}(s)) \right). 
\end{equation}
Note that $d_\delta >0$ since the continuous map $(t, s)\mapsto d(\mathbf{v}(t) , \mathbf{v}(s))$ attains its minimum on the compact set $[a, b]_\delta \times [0, L]\setminus (a, b)$ and this value must be greater than zero due to the injectivity of $\mathbf{v}$. From now on we assume $\delta$ to be fixed. Our results will be stated in the form of asymptotic estimates for the inner products $ \langle \mathbf{1}_A, \mathcal{D}(\mathbf{x}, \mathbf{r} ) \Psi_{\scriptscriptstyle K}^{\scriptscriptstyle N} \rangle$ which hold uniformly for all $(\mathbf{x}, \mathbf{r}) \in UT\mathbb{S}^2$ with
\begin{equation}\label{neighborhood}
d(\mathbf{x}, \partial A) = d(\mathbf{x}, \mathbf{v}([a, b]_\delta) ), \quad d(\mathbf{x}, \partial A) \leq \frac{\phi^\ast}{4}.
\end{equation}
As illustrated in \autoref{Fig3}, the set of all points $\mathbf{x}\in \mathbb{S}^2$ satisfying \eqref{neighborhood} constitutes a certain neighborhood of the curve segment $ \mathbf{v}([a, b]_\delta) $ and will be denoted by $\mathcal{N}(\mathbf{v}([a, b]_\delta))$. In particular, we point out that for each $\mathbf{x} \in \mathcal{N}(\mathbf{v}([a, b]_\delta))$ there exists a unique $p_\mathbf{x}\in [a, b]_\delta$ such that
\begin{equation}\label{eq3}
d(\mathbf{x}, \partial A) = d(\mathbf{x}, \mathbf{v}(p_\mathbf{x})).
\end{equation}
Indeed, the existence of such a point is obvious. To verify the uniqueness, let us assume that $p$ and $q$ are two different points in $[a, b]_\delta$ with $d(\mathbf{x}, \partial A)=d(\mathbf{x},\mathbf{v}(p)) = d(\mathbf{x}, \mathbf{v}(q)) $. Then, the spherical cap which is centered at $\mathbf{x}$ and has the opening angle $d(\mathbf{x}, \partial A)\leq \phi^\ast /4$, the upper bound following from the condition in \eqref{neighborhood}, clearly does not contain any point of $\partial A$ since we defined spherical caps to be open sets. Consequently, as the boundary of the mentioned cap has a curvature of at least $1/\sin(\phi^\ast/4)$, there must exist a point $o$ between $p$ and $q$ such that $\|\mathbf{v}''(o) \|_2^{-1} \leq \sin(\phi^{\ast}/4)$. This, however, contradicts the definition of $\phi^\ast$. In the following, when $\mathbf{x}\in  \mathcal{N}(\mathbf{v}([a, b]_\delta))$ is given, we will always use the symbol $p_\mathbf{x}$ to refer to the unique point in $[a, b]_\delta$ satisfying \eqref{eq3}.
\begin{figure}[t]
\centering
\includegraphics[width=0.42\textwidth]{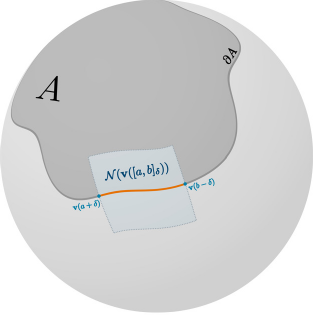}
\caption{The set $\mathcal{N}(\mathbf{v}([a, b]_\delta))$}\label{Fig3}
\end{figure}

\subsection{A key lemma}
Before presenting our main results, we will prove the following auxiliary lemma, which gives an upper bound on some approximation error when representing the smooth curve segment locally by circles. This will enable us to reduce the problem of detecting jump discontinuities along $C^3$-curves to estimating the frame coefficients of indicator functions of spherical caps. We note that a similar approach was taken in \citep{bib5} for the Euclidean setting.
\begin{lemma}\label{lemma1}
Let $p \in [a, b]_\delta$. Then there exists a unique spherical cap $C_p$ whose boundary  $\partial C_p$ has an arc-length parameterization 
\begin{equation*}
\mathbf{u}(t), \quad t \in [-\pi \|\mathbf{v}''(p) \|_2^{-1}, \pi \|\mathbf{v}''(p) \|_2^{-1}],
\end{equation*}
i.e., the length of $\partial C_p$ equals $ 2\pi\|\mathbf{v}''(p) \|_2^{-1}$, such that
\begin{enumerate}
\item $\mathbf{u}(0) = \mathbf{v}(p)$, $\mathbf{u}'(0) = \mathbf{v}'(p)$, $\mathbf{u}''(0) = \mathbf{v}''(p)$ and
\item for each $\varepsilon>0$ there exists some $\mathbf{a}\in A\cap C_p$ with $\mathbf{a}\perp \mathbf{v}'(p) $ and $d(\mathbf{a}, \mathbf{v}(p))<\varepsilon$, i.e., the boundary curve $\mathbf{u}$ is positively oriented.
\end{enumerate}
Furthermore, if $\mathbf{x} \in \mathcal{N}(\mathbf{v}([a, b]_\delta))$ and $p_\mathbf{x}\in [a, b]_\delta$ is the corresponding point with $d(\mathbf{x}, \partial A) = d(\mathbf{x}, \mathbf{v}(p_\mathbf{x}))$, then
\begin{equation}\label{eq9}
\int_{C(\mathbf{x}, r)} \lvert \mathbf{1}_{A}(\mathbf{y}) - \mathbf{1}_{C_{p_\mathbf{x}}}(\mathbf{y}) \rvert \, \mathrm{d}\omega(\mathbf{y}) \leq c\left( \sup_{t \in [a, b]} \| \mathbf{v}'''(t) \|_2 + 1 \right)  r^{4}, \quad   r \in [0, d_\delta/2).
\end{equation}
Here, $c=c(\phi^\ast)>0$ is a global constant which does not depend on any of the parameters. In particular, the upper bound does not depend on the choice of $\mathbf{x}$.
\end{lemma}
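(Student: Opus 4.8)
I would split the argument into the two assertions of the lemma: the existence and uniqueness of the osculating cap $C_p$, and the area estimate \eqref{eq9}.

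\textbf{The osculating cap.} A circle $\partial C(\mathbf z,\phi)$ traversed at unit speed on $\mathbb S^2$ has acceleration of constant norm $1/\sin\phi$, so to match the curvature $\|\mathbf v''(p)\|_2$ the cap $C_p$ must have opening angle $\phi(p)=\arcsin(\|\mathbf v''(p)\|_2^{-1})\in(0,\pi/2]$ (or, equivalently, its complementary cap has opening angle $\pi-\phi(p)$; both share the same boundary circle, of circumference $2\pi\sin\phi(p)=2\pi\|\mathbf v''(p)\|_2^{-1}$, which matches the stated parameter interval). Differentiating $\langle\mathbf v,\mathbf v\rangle_2\equiv1$ and $\langle\mathbf v',\mathbf v'\rangle_2\equiv1$ once each gives $\langle\mathbf v''(p),\mathbf v(p)\rangle_2=-1$ and $\langle\mathbf v''(p),\mathbf v'(p)\rangle_2=0$, so $\mathbf w:=\mathbf v''(p)+\mathbf v(p)$ is tangent to $\mathbb S^2$ at $\mathbf v(p)$, orthogonal to $\mathbf v'(p)$, and points toward the would‑be center. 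I would then take $C_p$ to be the cap centered at $\cos\phi(p)\,\mathbf v(p)+\sin\phi(p)\,\mathbf w/\|\mathbf w\|_2$ with opening angle $\phi(p)$, replacing it by its complement in case the positive side of $\mathbf v$ at $p$ is the complementary region, and check that the arc‑length parameterization $\mathbf u$ of $\partial C_p$ normalized by $\mathbf u(0)=\mathbf v(p)$, $\mathbf u'(0)=\mathbf v'(p)$ automatically satisfies $\mathbf u''(0)=\mathbf v''(p)$: both vectors are orthogonal to $\mathbf v'(p)$, have norm $1/\sin\phi(p)$, satisfy $\langle\,\cdot\,,\mathbf v(p)\rangle_2=-1$, and lie on the same side of $\mathbf v(p)$ inside the plane $\mathbf v'(p)^{\perp}$, and in that two‑dimensional plane these data determine the vector uniquely. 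Uniqueness of $C_p$ is then immediate (curvature fixes the opening angle, osculation fixes the boundary circle hence the center up to antipode, orientation fixes which cap is $C_p$), and item (2) is the positive‑orientation hypothesis on $\mathbf v$.

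\textbf{Reduction and Taylor estimate for \eqref{eq9}.} Write $\rho:=d(\mathbf x,\partial A)=d(\mathbf x,\mathbf v(p_\mathbf x))$. The minimizing geodesic from $\mathbf v(p_\mathbf x)$ to $\mathbf x$ meets $\partial A$ perpendicularly, hence — since $\partial A$ and $\partial C_{p_\mathbf x}$ are tangent at $\mathbf v(p_\mathbf x)$ — it also meets $\partial C_{p_\mathbf x}$ perpendicularly, so it runs through the center of $C_{p_\mathbf x}$ and $\mathbf v(p_\mathbf x)$ is the point of $\partial C_{p_\mathbf x}$ nearest to $\mathbf x$; thus $d(\mathbf x,\partial C_{p_\mathbf x})=\rho$ as well. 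If $\rho>r$, then — using \eqref{d_delta} and \eqref{eq1} to exclude any far‑away branch of $\mathbf v$ from re‑entering $C(\mathbf x,r)$ — the cap $C(\mathbf x,r)$ meets neither $\partial A$ nor $\partial C_{p_\mathbf x}$, lies on one side of each, and (because $\mathbf x$ is within $\phi^{\ast}/4$ of $\mathbf v(p_\mathbf x)$, where $A$ and $C_{p_\mathbf x}$ coincide to the order they are tangent) on the \emph{same} side of both; hence $\mathbf 1_A=\mathbf 1_{C_{p_\mathbf x}}$ on $C(\mathbf x,r)$ and \eqref{eq9} is trivial. In the remaining case $\rho\le r$, Taylor's theorem and item (1) give, for all $\tau$ with $p_\mathbf x+\tau\in[a,b]$,
\begin{equation*}
\|\mathbf v(p_\mathbf x+\tau)-\mathbf u(\tau)\|_2\le\tfrac16|\tau|^3\Bigl(\sup_{[a,b]}\|\mathbf v'''\|_2+\|\mathbf u'''\|_\infty\Bigr),
\end{equation*}
and since $\mathbf u$ parameterizes a circle of geodesic radius $\phi(p_\mathbf x)\in[\phi^{\ast},\pi/2]$ one has $\|\mathbf u'''\|_\infty=\|\mathbf v''(p_\mathbf x)\|_2^2=\sin^{-2}\phi(p_\mathbf x)\le\sin^{-2}\phi^{\ast}$, so the right‑hand side is at most $\beta(|\tau|):=c(\phi^{\ast})\bigl(\sup_{[a,b]}\|\mathbf v'''\|_2+1\bigr)|\tau|^3$.

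\textbf{Area bound via Fermi coordinates.} I would introduce geodesic coordinates $(\tau,\sigma)$ in a tube around the short arc of $\partial C_{p_\mathbf x}$ near $\mathbf v(p_\mathbf x)$, with $\sigma$ the signed geodesic distance to $\partial C_{p_\mathbf x}$ taken positive toward the inside of $C_{p_\mathbf x}$; these are well defined for $|\sigma|\le d_\delta$, because $2r<d_\delta<\phi^{\ast}/10$ stays well below the focal radius $\phi(p_\mathbf x)$ of the circle, the surface element is $J(\tau,\sigma)\,\mathrm d\tau\,\mathrm d\sigma$ with $J\le c(\phi^{\ast})$ (the relevant Jacobi field is bounded by $1+\cot(\phi^{\ast})\,d_\delta$), and in these coordinates $\partial A$ is a graph $\sigma=\psi(\tau)$ with $|\psi(\tau)|\le d(\mathbf v(p_\mathbf x+\tau),\partial C_{p_\mathbf x})\le d(\mathbf v(p_\mathbf x+\tau),\mathbf u(\tau))\le\tfrac{\pi}{2}\beta(|\tau|)$ by \eqref{normeq}. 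Since $\rho\le r$, any $\mathbf y\in C(\mathbf x,r)$ lies within distance $2r$ of $\mathbf v(p_\mathbf x)\in\partial C_{p_\mathbf x}$, so if moreover $\mathbf y\in A\triangle C_{p_\mathbf x}$ it lies in the tube, its coordinate satisfies $|\tau(\mathbf y)|\le c\,r$, and — using that near the arc $C_{p_\mathbf x}=\{\sigma>0\}$ while $A=\{\sigma>\psi(\tau)\}$, which is exactly the matched positive orientation — its $\sigma$‑coordinate lies strictly between $0$ and $\psi(\tau(\mathbf y))$, whence $|\sigma(\mathbf y)|\le\tfrac{\pi}{2}\beta(|\tau(\mathbf y)|)$. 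Integrating,
\begin{equation*}
\int_{C(\mathbf x,r)}\bigl|\mathbf 1_A-\mathbf 1_{C_{p_\mathbf x}}\bigr|\,\mathrm d\omega\le\int_{-cr}^{cr}\pi\,\beta(|\tau|)\sup_\sigma J(\tau,\sigma)\,\mathrm d\tau\le c(\phi^{\ast})\Bigl(\sup_{[a,b]}\|\mathbf v'''\|_2+1\Bigr)\int_{-cr}^{cr}|\tau|^3\,\mathrm d\tau,
\end{equation*}
and $\int_{-cr}^{cr}|\tau|^3\,\mathrm d\tau=\tfrac12(cr)^4$ yields \eqref{eq9} with a constant depending only on $\phi^{\ast}$.

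\textbf{Main obstacle.} The scaling itself is forced (a third‑order gap between $\mathbf v$ and its osculating circle, integrated over an arc of length $\lesssim r$, inevitably produces $r^4$); the delicate part is the geometric bookkeeping near $\mathbf v(p_\mathbf x)$: identifying $C_{p_\mathbf x}$ with $\{\sigma>0\}$ and $A$ with $\{\sigma>\psi\}$ with the \emph{same} sign, showing that $\partial A\cap C(\mathbf x,r)$ is the single short arc $\{\mathbf v(p_\mathbf x+\tau)\}$ rather than also containing a distant branch of $\mathbf v$, and bounding the Fermi Jacobian uniformly. All of this rests on the standing hypotheses — the positive orientation of $\mathbf v$, the smallness conditions \eqref{eq1}, the localization $d(\mathbf x,\partial A)\le\phi^{\ast}/4$ in \eqref{neighborhood}, and the definition \eqref{d_delta} of $d_\delta$ — and the crux is to combine them so that every implied constant is absorbed into a single $c(\phi^{\ast})$.
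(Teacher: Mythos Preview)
Your proof is correct in outline and reaches the same $r^4$ bound, but the route is genuinely different from the paper's. The paper fixes polar coordinates centered at the pole $\mathbf z$ of $C_{p_\mathbf x}$, so that $\partial C_{p_\mathbf x}$ is the latitude circle $\theta=\phi$, shows explicitly (using the smallness conditions \eqref{eq1}) that the longitude $\varphi_\mathbf v$ of $\mathbf v$ is $C^3$ and strictly increasing with $\varphi_\mathbf v'\ge 1/2$, and then \emph{reparameterizes the circle} to $\tilde{\mathbf u}(t)=\mathbf x(\phi,\varphi_\mathbf v(t))$ so that $\mathbf v(t)$ and $\tilde{\mathbf u}(t)$ share the same longitude; the Taylor comparison is between $\mathbf v$ and $\tilde{\mathbf u}$, and the symmetric difference is then bounded by the one--line estimate $\int_{-cr}^{cr} d(\mathbf v(t_\mathbf v(\varphi)),\tilde{\mathbf u}(t_\mathbf v(\varphi)))\,\mathrm d\varphi$ in ordinary spherical coordinates, with no Jacobi fields entering. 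Your Fermi--coordinate argument is more intrinsic (it would transplant to any surface), trades the paper's explicit longitude calculus for a Jacobian bound on the tubular neighborhood, and your case split $\rho>r$ versus $\rho\le r$ is unnecessary but harmless. Two small points to tighten: your formula for the center via $\mathbf w=\mathbf v''(p)+\mathbf v(p)$ breaks down at the extreme $\phi(p)=\pi/2$, where $\mathbf w=0$ and the great--circle case needs a word; and the chain $|\psi(\tau)|\le d(\mathbf v(p_\mathbf x+\tau),\partial C_{p_\mathbf x})$ tacitly identifies the arc--length parameter along $\partial A$ with the Fermi foot--point parameter $\tau$, whereas these differ by a $C^1$ reparameterization $s(\tau)$ with $s(0)=0$, $s'(0)=1$ --- exactly the issue the paper handles via the inverse $t_\mathbf v$ and the bound $|t_\mathbf v'|\le 2$. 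Inserting $|s(\tau)|\le c(\phi^\ast)|\tau|$ there makes your estimate rigorous with the same constant dependence.
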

In the proof of this lemma, we will make use of the following relation between the distance of two points with equal latitude and the difference of their longitudinal coordinates.
\begin{lemma}\label{lemma3}
Let $\varphi \in [-\pi, \pi]$ and $\phi \in [0, \pi]$. Then
\begin{equation*}
d(\mathbf{x}(\phi, 0), \mathbf{x}(\phi, \varphi) ) = \arccos(1+(\cos \varphi -1)\sin^2\phi)\geq \sqrt{1- \frac{\varphi^2}{12}} \, \lvert \varphi \rvert \sin\phi.
\end{equation*}
In particular, for $\phi \in (0, \pi)$, $\varphi \mapsto d(\mathbf{x}(\phi, 0), \mathbf{x}(\phi, \varphi) )$ is an even function which is strictly increasing on $[0, \pi]$ and
\begin{equation*}
\lvert \varphi \rvert \leq \frac{5 \, d(\mathbf{x}(\phi, 0), \mathbf{x}(\phi, \varphi) )}{2\sin \phi}.
\end{equation*}
\end{lemma}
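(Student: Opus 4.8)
The plan is to reduce all three assertions to the single closed-form identity
\[
d(\mathbf{x}(\phi,0),\mathbf{x}(\phi,\varphi)) = 2\arcsin\!\bigl(\sin(\lvert\varphi\rvert/2)\,\sin\phi\bigr),
\]
after which everything follows from elementary one-variable estimates. To obtain this identity I would first insert the parameterization \eqref{polarkoord.} into the definition $d(\mathbf{x},\mathbf{y}) = \arccos(\langle\mathbf{x},\mathbf{y}\rangle_2)$ and compute
\[
\langle \mathbf{x}(\phi,0),\mathbf{x}(\phi,\varphi)\rangle_2 = \cos^2\phi + \sin^2\phi\cos\varphi = 1 + (\cos\varphi - 1)\sin^2\phi,
\]
which is the stated equality. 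Using $1-\cos\varphi = 2\sin^2(\varphi/2)$, the argument of the $\arccos$ equals $1 - 2\sin^2(\varphi/2)\sin^2\phi = 1-2t^2$ with $t = \lvert\sin(\varphi/2)\rvert\sin\phi\in[0,1]$; since $\varphi\in[-\pi,\pi]$ forces $\varphi/2\in[-\pi/2,\pi/2]$, we have $\lvert\sin(\varphi/2)\rvert = \sin(\lvert\varphi\rvert/2)$, and the elementary identity $\arccos(1-2t^2) = 2\arcsin t$, valid for $t\in[0,1]$, gives the claimed formula.

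From this formula the qualitative statements are immediate: for $\phi\in(0,\pi)$ one has $\sin\phi>0$, the map $\varphi\mapsto\sin(\varphi/2)$ is strictly increasing from $0$ to $1$ on $[0,\pi]$, and $\arcsin$ is strictly increasing, so $\varphi\mapsto d(\mathbf{x}(\phi,0),\mathbf{x}(\phi,\varphi))$ is strictly increasing on $[0,\pi]$, while evenness in $\varphi$ is visible directly. For the lower bound I would use $\arcsin t\ge t$ to get $d(\mathbf{x}(\phi,0),\mathbf{x}(\phi,\varphi))\ge 2\sin(\lvert\varphi\rvert/2)\sin\phi$, and then verify the purely trigonometric inequality $2\sin(\lvert\varphi\rvert/2)\ge\sqrt{1-\varphi^2/12}\,\lvert\varphi\rvert$. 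Writing $s=\lvert\varphi\rvert/2\in[0,\pi/2]$ this reads $\sin s\ge s\sqrt{1-s^2/3}$, which I would prove by bounding the right-hand side with $\sqrt{1-u}\le 1-u/2$ (so that $s\sqrt{1-s^2/3}\le s-s^3/6$) and the left-hand side with the standard estimate $\sin s\ge s-s^3/6$. Finally, $\lvert\varphi\rvert\le 5\,d(\mathbf{x}(\phi,0),\mathbf{x}(\phi,\varphi))/(2\sin\phi)$ follows from the inequality just proved together with $\sqrt{1-\varphi^2/12}\ge\sqrt{1-\pi^2/12}>2/5$.

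The argument is entirely elementary, so there is no real obstacle — the points that require attention are purely bookkeeping. One must check that the ranges are such that $\arccos(1-2t^2)=2\arcsin t$ and $\lvert\sin(\varphi/2)\rvert=\sin(\lvert\varphi\rvert/2)$ genuinely apply, and one should verify the constant $5/2$ honestly, since the estimate $\sqrt{1-\pi^2/12}>2/5$ is fairly tight: it amounts to $\pi^2<\tfrac{252}{25}=10.08$, which is comfortably true (e.g.\ $\pi^2<10$) but not by a wide margin. Everything else is a short computation.
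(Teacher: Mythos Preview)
Your proof is correct and essentially identical to the paper's: the paper uses $\arccos z \ge \sqrt{2(1-z)}$ followed by $\cos\varphi \le 1-\varphi^2/2+\varphi^4/24$, which after the substitution $z=1-2t^2$, $t=\sin(\lvert\varphi\rvert/2)\sin\phi$, is precisely your chain $\arcsin t\ge t$ and $\sin s\ge s-s^3/6$. The only difference is that you pass through the closed form $d=2\arcsin(\sin(\lvert\varphi\rvert/2)\sin\phi)$, which has the mild advantage of making the evenness, monotonicity, and the $5/2$ constant explicit, whereas the paper leaves these ``in particular'' claims to the reader.
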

\begin{proof}
Since the first statement is trivial for $\phi\in\{ 0, \pi\}$, we can assume that $\phi\in (0, \phi)$. Using \eqref{polarkoord.}, the equality
\begin{equation*}
d(\mathbf{x}(\phi, 0), \mathbf{x}(\phi, \varphi) ) = \arccos(1+(\cos \varphi -1)\sin^2\phi)
\end{equation*}
follows by a straightforward calculation. Additionally, the inequality $\arccos z$ $\geq \sqrt{2 (1-z)}$ yields 
\begin{equation*}
\arccos(1+(\cos \varphi -1) \sin^2\phi)\geq  \sqrt{2 (1-\cos \varphi)  \sin^2\phi}.
\end{equation*}
Finally, by applying $\cos \varphi \leq 1-\varphi^2/2 + \varphi^4/24$, we obtain the desired inequality.
\end{proof}
\begin{proof}[Proof of {\hyperref[lemma1]{Lemma~\ref*{lemma1}}}]
Let $p \in [a, b]_\delta$. Since the curvature $\|\mathbf{v}''(p)\|_2$ of $\mathbf{v}$ at $p$ must be at least $1$, we can choose $\phi \in (0, \pi/2]$ such that $\sin \phi = \|\mathbf{v}''(p) \|_2^{-1}$.
Without loss of generality, let
\begin{equation*}
\mathbf{v}(p) = \begin{pmatrix}
\sin \phi \\
0 \\
\cos \phi
\end{pmatrix}, \quad \mathbf{v}'(p) \in \{ \mathbf{e}_2, -\mathbf{e}_2\}.
\end{equation*}
It then follows that either
\begin{equation*}
\mathbf{v}''(p) = -\frac{1}{\sin \phi}\mathbf{e}_1  \quad \text{or} \quad \mathbf{v}''(p) = -\frac{1}{\sin \phi} \begin{pmatrix}
- \cos 2\phi \\
0 \\
\sin 2\phi
\end{pmatrix}.
\end{equation*}
Indeed, differentiating both sides of $\langle \mathbf{v}'(t), \mathbf{v}'(t) \rangle_2 \equiv 1$ yields $\mathbf{v}''(p) \perp \mathbf{e}_2$. Similarly, by differentiating both sides of $\langle \mathbf{v}(t), \mathbf{v}(t)\rangle_2 \equiv 1$ twice, we see that $\langle \mathbf{v}''(p), \mathbf{v}(p) \rangle_2 = -1$. Thus, if $\mathbf{v}''(p) \sin \phi = (\cos z, 0, \sin z)$, $z \in [0, 2\pi)$, we have
\begin{equation*}
 -\sin \phi =  \langle \mathbf{v}''(p) \sin \phi, \mathbf{v}(p) \rangle_2 =\cos z \sin \phi + \sin z \cos \phi =\sin(z+\phi)
\end{equation*}
and consequently $z=\pi$ or $z=2\pi-2\phi$. Since rotating the whole setup by $\mathbf{R}_{\mathbf{e}_3}(\pi)\mathbf{R}_{\mathbf{e}_2}(-2\phi)$ transfers $(-\cos 2\phi, 0, \sin 2\phi)$ to $\mathbf{e}_1$, switches the sign of $\mathbf{v}'(p)$ and keeps $\mathbf{v}(p)$ invariant, we can assume that $\mathbf{v}''(p)  = -(\sin \phi)^{-1}\mathbf{e}_1$. Now, if $\mathbf{v}'(p) = \mathbf{e}_2$, then
\begin{equation}\label{eq2}
\mathbf{u}(t) = \begin{pmatrix}
\sin \phi \cos (t/\sin \phi) \\
\sin \phi \sin (t/\sin \phi) \\
\cos \phi
\end{pmatrix}, \quad t \in [-\pi \sin \phi, \pi \sin \phi],
\end{equation}
defines a unit speed curve with $\mathbf{u}(0) = \mathbf{v}(p)$, $\mathbf{u}'(0) = \mathbf{v}'(p)$ and $\mathbf{u}''(0) = \mathbf{v}''(p)$, as one can easily verify. For $\mathbf{v}'(p) = -\mathbf{e}_2$, choose the inverted curve $t \mapsto \mathbf{u}(-t)$. Indeed, the image of $\mathbf{u}$ yields the unique circle on the sphere which has a parameterization satisfying these properties. Furthermore, there are exactly two spherical caps whose boundary is equal to the image of $\mathbf{u}$, only one of which, we will call it $C_p$, satisfying the second condition stated in \hyperref[lemma1]{Lemma~\ref*{lemma1}}. This completes the first part of the proof.

Now, let $\mathbf{x} \in \mathcal{N}(\mathbf{v}([a, b]_\delta))$ and $p_\mathbf{x}\in [a, b]_\delta$ with $d(\mathbf{x}, \partial A) = d(\mathbf{x}, \mathbf{v}(p_\mathbf{x}))$. Also, let $\phi \in (0, \pi/2]$ such that $\sin \phi = \| \mathbf{v}''(p_\mathbf{x})\|_2^{-1}$. Just like in the first part of the proof, we can assume that $\mathbf{v}(p_\mathbf{x}) = (\sin \phi, 0, \cos \phi)$, $\mathbf{v}'(p_\mathbf{x})\in \{ \mathbf{e}_2, -\mathbf{e}_2\}$ and $\mathbf{v}''(p_\mathbf{x}) = -(\sin\phi)^{-1}\mathbf{e}_1$. Moreover, we only consider the case $\mathbf{v}'(p_\mathbf{x})=\mathbf{e}_2$ as otherwise the proof is completely analogous. In the following, we will use the notation
\begin{equation}\label{eq5}
\mathbf{v}(t) = \mathbf{v}( \theta_\mathbf{v} (t), \varphi_\mathbf{v} (t)) = \begin{pmatrix}
\sin \theta_\mathbf{v} (t) \, \cos \varphi_\mathbf{v} (t) \\
\sin \theta_\mathbf{v} (t) \, \sin \varphi_\mathbf{v} (t) \\
\cos \theta_\mathbf{v} (t)
\end{pmatrix} = \begin{pmatrix}
v_1(t) \\
v_2(t) \\
v_3(t)
\end{pmatrix}, \quad t \in [0, L],
\end{equation}
which is justified by \eqref{polarkoord.}. Clearly, $ \varphi_\mathbf{v} (p_\mathbf{x}) = 0$ and $\theta_\mathbf{v} (p_\mathbf{x}) = \phi$. Additionally, \eqref{eq1} implies that for each $t \in [a, b]$ we have
\begin{align*}
\|\mathbf{v}(t)- \mathbf{v}(p_\mathbf{x}) \|_2 \leq \frac{1-\cos \phi}{2}, \quad \| \mathbf{v}^{\prime}(t) - \mathbf{v}'(p_\mathbf{x})\|_2 \leq\frac{4-(1+\cos\phi)^2}{16},
\end{align*}
and therefore
\begin{equation}\label{eq18}
\lvert v_2(t)\rvert \leq (1-\cos \phi)/2, \quad \lvert v_3(t) \rvert \leq (1+\cos \phi)/2,
\end{equation}
as well as 
\begin{equation}\label{eq4}
v_2'(t) \in [3/4 +(1+\cos\phi)^2/16 , 1], \quad \lvert v_3'(t)\rvert \leq 1/4-(1+\cos\phi)^2/16.
\end{equation}
Using \eqref{eq18}, an elementary calculation yields
\begin{equation*}
\left\lvert \frac{v_2(t)}{\sqrt{1-v_3(t)^2}} \right\rvert \leq \frac{1-\cos \phi}{2\sqrt{1-(1+\cos \phi)^2/4}} = \frac{\sqrt{1-\cos \phi}}{\sqrt{3+\cos \phi}} \leq \frac{1}{\sqrt{3}}, \quad t \in [a, b].
\end{equation*}
According to \eqref{eq5}, we have
\begin{equation*}
\varphi_\mathbf{v}(t) = \arcsin\!\left( \frac{v_2(t)}{\sin(\arccos(v_3(t)))} \right) = \arcsin\!\left( \frac{v_2(t)}{\sqrt{1-v_3(t)^2}} \right) , \quad t \in [a, b].
\end{equation*}
Since $\arcsin$ is a function in $C^3([-1/\sqrt{3}, 1/\sqrt{3}])$, we conclude that $\varphi_\mathbf{v} \in C^3([a, b])$. Furthermore,
\begin{equation}\label{eq6}
\varphi_\mathbf{v}'(t) = \arcsin'\!\left( \frac{v_2(t)}{\sqrt{1-v_3(t)^2}} \right) \left( \frac{v_2'(t)}{\sqrt{1-v_3(t)^2}}  + \frac{v_2(t) v_3(t) v_3'(t)}{(1-v_3(t)^2)^{3/2}} \right).
\end{equation}
Here, 
\begin{equation}\label{eq19}
\left( \frac{v_2'(t)}{\sqrt{1-v_3(t)^2}}  + \frac{v_2(t) v_3(t) v_3'(t)}{(1-v_3(t)^2)^{3/2}} \right) \geq 1/2,
\end{equation}
which can be verified as follows. Using \eqref{eq18} and \eqref{eq4}, together with the fact that $\lvert v_2(t)\rvert \leq \sqrt{1-v_3(t)^2}$, we first obtain 
\begin{equation*}
\left( \frac{v_2'(t)}{\sqrt{1-v_3(t)^2}}  + \frac{v_2(t) v_3(t) v_3'(t)}{(1-v_3(t)^2)^{3/2}} \right) \geq\frac{3}{4} - \frac{\lvert v_3(t) v_3'(t) \rvert}{1-v_3(t)^2}.
\end{equation*}
Additionally, by \eqref{eq4}, it holds that
\begin{equation*}
\lvert v_3'(t)\rvert \leq \frac{1-\left(\frac{1}{2} (1+\cos\phi)\right)^2}{4}\leq \frac{1-v_3(t)^2}{4 \lvert v_3(t) \rvert},
\end{equation*}
which confirms \eqref{eq19}. Therefore, together with
\begin{equation*}
\inf_{x \in (-1,1)} \arcsin'(x) = \inf_{x \in (-1,1)} \frac{1}{\sqrt{1-x^2}} = 1,
\end{equation*}
we obtain
\begin{equation}\label{eq10}
\varphi_\mathbf{v}'(t) \geq 1/2, \quad t \in [a, b].
\end{equation}
In particular, $\varphi_\mathbf{v}$ is strictly increasing on $[a, b]$ and thus the inverse function $t_\mathbf{v} \colon [\varphi_\mathbf{v}(a), \varphi_\mathbf{v}(b)] \rightarrow [a, b]$, where
\begin{equation*}
t_\mathbf{v}(\varphi_\mathbf{v}(t))) = t \quad \text{for all } t \in [a, b],
\end{equation*}
exists. Of course, $t_\mathbf{v}$ is continuous. Additionally, $t_\mathbf{v} \in C^1( (\varphi_\mathbf{v}(a), \varphi_\mathbf{v}(b)) )$ with
\begin{equation*}
t_\mathbf{v}'(\varphi) = \frac{1}{\varphi_\mathbf{v}'(t_\mathbf{v}(\varphi))}\leq 2.
\end{equation*}
Consequently, the mean value theorem yields
\begin{equation}\label{eq12}
\lvert t_\mathbf{v}(\varphi_1)  -  t_\mathbf{v}(\varphi_2)\rvert \leq 2 \lvert \varphi_1 - \varphi_2 \rvert  \quad \text{for all } \varphi_1, \varphi_2 \in [\varphi_\mathbf{v}(a), \varphi_\mathbf{v}(b)].
\end{equation}

After these preliminary considerations, we can now approximate $\partial A$ locally, meaning at the point $\mathbf{v}(p_\mathbf{x})$, by the boundary of a suitable spherical cap. Note that by Taylor's theorem we have
\begin{equation}\label{eq7}
\mathbf{v}(t) = \mathbf{v}(p_\mathbf{x}) + \mathbf{v}'(p_\mathbf{x})(t-p_\mathbf{x}) +\frac{1}{2} \mathbf{v}''(p_\mathbf{x}) (t-p_\mathbf{x})^2+ \frac{1}{6}\! \begin{pmatrix}
v_1'''(\xi_1) \\
v_2'''(\xi_2) \\
v_3'''(\xi_3)
\end{pmatrix} (t-p_\mathbf{x})^3
\end{equation}
for each $t \in [a, b]$, where $\xi_i$ lies between $t$ and $p_\mathbf{x}$ and 
\begin{equation*}
\left \| \begin{pmatrix}
v_1'''(\xi_1) \\
v_2'''(\xi_2) \\
v_3'''(\xi_3)
\end{pmatrix}  \right \|_2 \leq \sqrt{3} \sup_{t \in [a, b]} \| \mathbf{v}'''(t) \|_2.
\end{equation*}
We will now consider the unique spherical cap $C_{p_\mathbf{x}}$ given in the first part of the proof. Locally, its boundary corresponds to the curve
\begin{equation*}
\tilde{\mathbf{u}}(t) = \begin{pmatrix}
\sin \phi \cos (\varphi_\mathbf{v}(t) ) \\
\sin \phi \sin (\varphi_\mathbf{v}(t) ) \\
\cos \phi
\end{pmatrix}, \quad t \in [a, b].
\end{equation*}
This re-parameterization of \eqref{eq2} allows us express the boundary $\partial C_{p_\mathbf{x}}$ locally in such a way that the longitudinal coordinates of $\tilde{\mathbf{u}}(t)$ and $\mathbf{v}(t)$ are equal for all $t \in [a, b]$. Using \eqref{eq6}, it is straightforward to show that $\varphi_\mathbf{v}'(p_\mathbf{x}) = (\sin \phi)^{-1}$, $\varphi_\mathbf{v}''(p_\mathbf{x}) = 0$ as well as
\begin{equation*}
\tilde{\mathbf{u}}(p_\mathbf{x}) = \mathbf{v}(p_\mathbf{x}), \quad \tilde{\mathbf{u}}'(p_\mathbf{x}) = \mathbf{v}'(p_\mathbf{x}), \quad \tilde{\mathbf{u}}''(p_\mathbf{x}) = \mathbf{v}''(p_\mathbf{x}).
\end{equation*}
Consequently, Taylor's theorem yields
\begin{equation}\label{eq20}
\tilde{\mathbf{u}}(t) = \mathbf{v}(p_\mathbf{x}) + \mathbf{v}'(p_\mathbf{x})(t-p_\mathbf{x}) + \frac{1}{2}\mathbf{v''}(p_\mathbf{x}) (t-p_\mathbf{x})^2+ \frac{1}{6}\! \begin{pmatrix}
\tilde{u}_1'''(\xi_1) \\
\tilde{u}_2'''(\xi_2) \\
\tilde{u}_3'''(\xi_3)
\end{pmatrix} (t-p_\mathbf{x})^3
\end{equation}
for $t \in [a, b]$, where $\xi_i$ lies between $t$ and $p_\mathbf{x}$. Additionally,  elementary calculations yield
\begin{equation*}
\left \| \begin{pmatrix}
\tilde{u}_1'''(\xi_1) \\
\tilde{u}_2'''(\xi_2) \\
\tilde{u}_3'''(\xi_3)
\end{pmatrix}  \right \| \leq c_{\phi^\ast} \left( \sup_{t \in [a, b]} \| \mathbf{v}'''(t) \|_2 + 1\right).
\end{equation*}
Thus, combining \eqref{eq7} and \eqref{eq20}, we obtain
\begin{equation}\label{eq11}
\| \mathbf{v}(t) - \tilde{\mathbf{u}}(t) \|_2 \leq c_{\phi^\ast} \left( \sup_{t \in [a, b]} \| \mathbf{v}'''(t) \|_2 +1 \right) \lvert t-p_\mathbf{x}\rvert^3, \quad t \in [a, b].
\end{equation}

Finally, let us prove the estimate \eqref{eq9}. Under the given assumptions, $C(\mathbf{x}, r)$ does not contain any points $\mathbf{v}(s)$ with $s \in [0, L]\setminus [a, b]$. Indeed, if such a point $\mathbf{v}(s) \in C(\mathbf{x},r)$ would exist, then $d(\mathbf{v}(s), \mathbf{x}) \leq r< d_\delta /2$. But since $\mathbf{v}(p_\mathbf{x})$ has the minimal distance towards $\mathbf{x}$ among all points of $\partial A$, we conclude that $d( \mathbf{v}(p_\mathbf{x}), \mathbf{x} ) < d_\delta /2$ and therefore $d(\mathbf{v}(p_\mathbf{x}),\mathbf{v}(s) )  < d_\delta $. However, this contradicts the definition of $d_\delta$.

Next, we show that for each $\mathbf{y}(\theta, \varphi) \in C(\mathbf{x}, r)$ it holds that $\lvert \varphi \rvert \leq c \,r$ and, in particular, $\varphi \in [\varphi_\mathbf{v}(a), \varphi_\mathbf{v}(b)]$. First, it should be noted that $\mathbf{x} = \mathbf{x}(\theta, 0)$ with $\lvert \theta-\phi \rvert \leq \phi^\ast/4$. Indeed, for $\mathbf{x} = \mathbf{v}(p_\mathbf{x})$ this holds trivially and for $\mathbf{x} \neq \mathbf{v}(p_\mathbf{x})$ the function $ t \mapsto d(\mathbf{v}(t), \mathbf{\mathbf{x}})$ is continuously differentiable in a neighborhood of $t=p_\mathbf{x}$ and attains a global minimum there. Therefore,
\begin{equation*}
\frac{\mathrm{d}}{\mathrm{d} t}  d(\mathbf{v}(t), \mathbf{x}) \big\vert_{t=p_\mathbf{x}} = \arccos'( \langle \mathbf{v}(p_\mathbf{x}), \mathbf{x}\rangle_2) \, \langle \mathbf{v}'(p_\mathbf{x}),  \mathbf{x}\rangle_2 =0,
\end{equation*}
which implies that $\mathbf{x}\perp \mathbf{e}_2$. Thus, taking into account that $d(\mathbf{x}, \mathbf{v}(p_\mathbf{x}))\leq \phi^\ast/4$ by \eqref{eq3}, we have $\mathbf{x} = \mathbf{x}(\theta, 0)$ with $\lvert \theta-\phi \rvert \leq \phi^\ast/4$, as claimed. Consequently, since $ \phi^\ast \leq \phi \leq \pi/2$ and $r \leq \phi^\ast/4$, it is now obvious that the absolute value of the longitudinal coordinate $\varphi$ of any point $\mathbf{y}(\theta, \varphi) \in C(\mathbf{x}, r)$ can not exceed $\lvert \tilde{\varphi} \rvert$ with $d(\mathbf{x}(\phi^\ast/2, \tilde{\varphi}), \mathbf{x}(\phi^\ast/2,0)) = r$. By \hyperref[lemma3]{Lemma~\ref*{lemma3}} and \eqref{d_delta}, we obtain $\lvert \tilde{\varphi} \rvert \leq 5 \, r/(2\sin(\phi^\ast /2))\leq \delta/2$, where the last inequality follows from $r\leq d_\delta /2$ and \eqref{d_delta}. Finally, we note that by the mean value theorem and \eqref{eq10} we have
\begin{equation*}
\varphi_\mathbf{v}(a+\delta) -  \varphi_\mathbf{v}(a), \varphi_\mathbf{v}(b) -  \varphi_\mathbf{v}(b-\delta) \geq \delta/2
\end{equation*}
and therefore  $\lvert \varphi \rvert \leq \lvert \tilde{\varphi} \rvert \leq \min(-\varphi_\mathbf{v}(a), \varphi_\mathbf{v}(b))$.

With these considerations in mind, we have
\begin{equation}\label{eq14}
\int_{C(\mathbf{x},r)} \lvert \mathbf{1}_{A}(\mathbf{y}) - \mathbf{1}_{C_{p_\mathbf{x}}}(\mathbf{y}) \rvert \, \mathrm{d}\omega(\mathbf{y}) \leq \int_{-cr}^{cr} d( \mathbf{v}(t_\mathbf{v}(\varphi)) ,\tilde{\mathbf{u}}(t_\mathbf{v}(\varphi))) \, \mathrm{d}\varphi.
\end{equation}
Furthermore, by \eqref{normeq} it holds that
\begin{equation*}
d( \mathbf{v}(t_\mathbf{v}(\varphi)) , \tilde{\mathbf{u}}(t_\mathbf{v}(\varphi))) \leq \frac{\pi}{2} \|\mathbf{v}(t_\mathbf{v}(\varphi)) - \tilde{\mathbf{u}}(t_\mathbf{v}(\varphi)) \|_2.
\end{equation*}
Hence, using \eqref{eq11} and \eqref{eq12}, we obtain
\begin{equation*}
d( \mathbf{v}(t_\mathbf{v}(\varphi)) ,\tilde{\mathbf{u}}(t_\mathbf{v}(\varphi))) \leq c_{\phi^\ast} \left( \sup_{t \in [a, b]} \| \mathbf{v}'''(t) \|_2 +1 \right) \lvert \varphi\rvert^3, \quad \lvert \varphi \rvert < \delta/2. 
\end{equation*}
Lastly, combining this estimate with \eqref{eq14} yields the desired bound in \eqref{eq9}.
\end{proof}

\subsection{Main results}
We will now state our main theorems, which reveal clear connections between geometrical properties of $\partial A$ and the frame coefficients with respect to $\mathbf{1}_A$. As illustrated in \autoref{Fig4}, it is clear that the inner products $\langle \mathbf{1}_A, \mathcal{D}(\mathbf{x}, \mathbf{r} ) \Psi_{\scriptscriptstyle K}^{\scriptscriptstyle N} \rangle$ will depend heavily on both the position and orientation of the wavelet compared to the closest edge segment. 

\begin{figure}[t]
\centering
\includegraphics[angle = 0, width=0.65\textwidth]{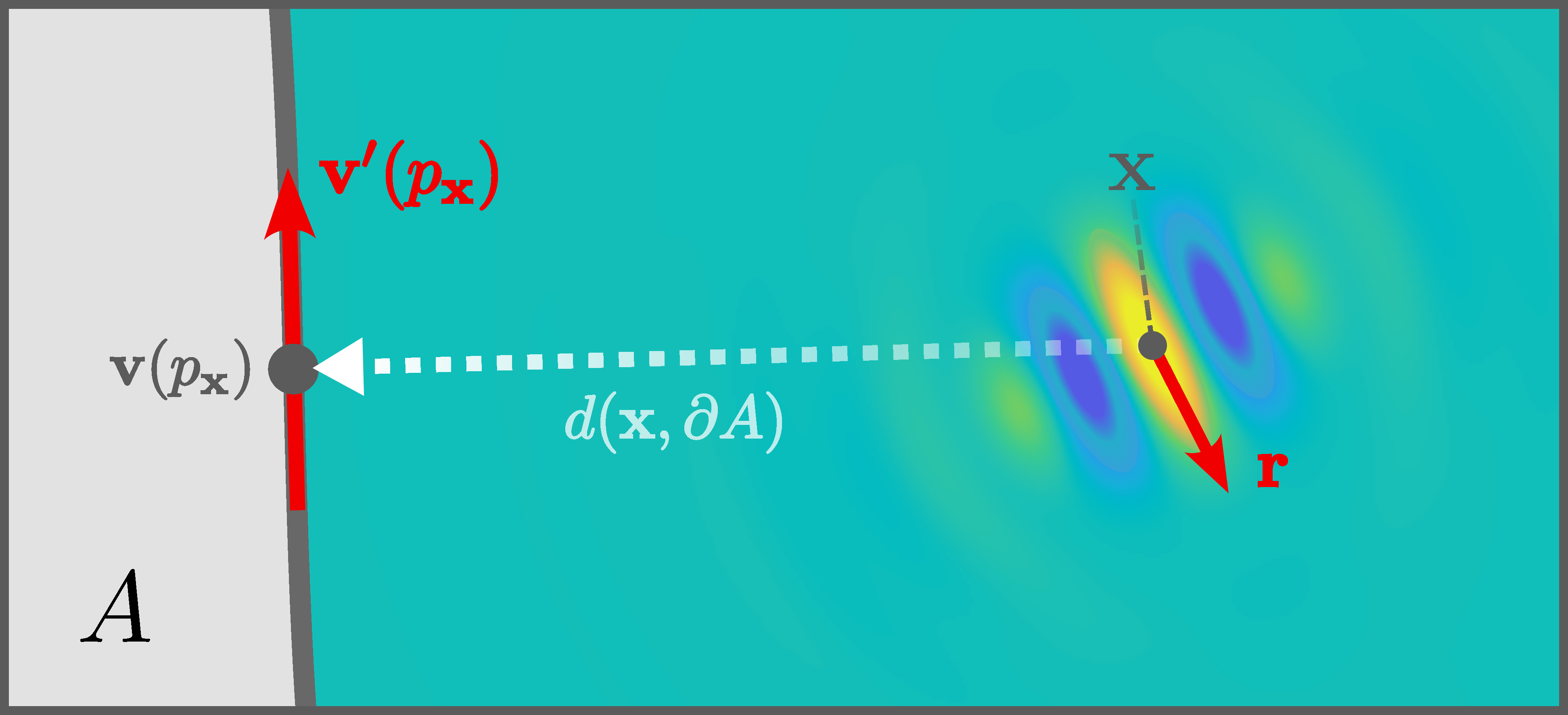}
\caption{A localized wavelet $\mathcal{D}(\mathbf{x}, \mathbf{r}) \Psi_{ \scriptscriptstyle K}^{\scriptscriptstyle N}$ in	the vicinity of the boundary $\partial A$}\label{Fig4}
\end{figure}

\begin{theorem}\label{thm1}
Let $\Psi_{ \scriptscriptstyle K}^{\scriptscriptstyle N}$ be the directional wavelet defined in \eqref{directional wavelets} with $\kappa \in C^{3q-1}([0, \infty))$, or $\kappa \in C^q([0, \infty))$ if $K=1$, where $q= K+\max(5, 1+\lceil \ell(\ell+4)/2\rceil)$ and $\ell \in \mathbb{N}$. Then there exist constants $c_1$, $c_2$, $N_0>0$ as well as some nonempty open interval $I \subset (0, \infty)$, such that for all $(\mathbf{x}, \mathbf{r})\in UT\mathbb{S}^2$ with $\mathbf{x}\in  \mathcal{N}(\mathbf{v}([a, b]_\delta))$ it holds that
\begin{equation}\label{wavelets asympt2}
\lvert \langle \mathbf{1}_A, \mathcal{D}(\mathbf{x}, \mathbf{r} ) \Psi_{\scriptscriptstyle K}^{\scriptscriptstyle N} \rangle \rvert \geq c_1 \,  \lvert \chi_{\scriptscriptstyle K}(d_\mathbf{x}(\mathbf{r}, \mathbf{v}'(p_\mathbf{x}))) \rvert \quad \text{if } N d(\mathbf{x}, \partial A) \in I, \; N \geq N_0,
\end{equation}
and
\begin{equation}\label{wavelets asympt3}
\lvert \langle \mathbf{1}_A, \mathcal{D}(\mathbf{x}, \mathbf{r}) \Psi_{ \scriptscriptstyle K}^{\scriptscriptstyle N} \rangle \rvert \leq \frac{c_2 \left( \sup_{t \in [a, b]} \| \mathbf{v}'''(t) \|_2 +1 \right)  }{(1+N d( \mathbf{x}, \partial A))^\ell} \quad \text{if } N>(2/d_\delta)^{(1-1/(\ell+3))^{-1}}.
\end{equation}
The dependencies of the constants are $c_1=c_1(\kappa, \phi^\ast, K)$, $c_2 = c_2(\kappa, \phi^\ast, K, \ell)$, $N_0 = N_0(\kappa, \phi^\ast, K, d_\delta, \sup_{t \in [a, b]} \| \mathbf{v}'''(t) \|_2)$ and $I = I(\kappa, \phi^\ast, K)$.
\end{theorem}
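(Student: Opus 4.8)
The plan is to compare $\mathbf 1_A$ with the indicator function of the osculating cap produced by \autoref{lemma1}, thereby reducing to the case of a jump across a circle (which is covered by \citep[Theorem~3.1]{bib25}), and then to control the resulting error by combining \autoref{lemma1} with the localization bounds \eqref{wavelets bound 1}--\eqref{wavelets bound 2}. For $(\mathbf x,\mathbf r)\in UT\mathbb S^2$ with $\mathbf x\in\mathcal N(\mathbf v([a,b]_\delta))$ I would write $\langle\mathbf 1_A,\mathcal D(\mathbf x,\mathbf r)\Psi_{\scriptscriptstyle K}^{\scriptscriptstyle N}\rangle=M_N+E_N$ with $M_N=\langle\mathbf 1_{C_{p_\mathbf x}},\mathcal D(\mathbf x,\mathbf r)\Psi_{\scriptscriptstyle K}^{\scriptscriptstyle N}\rangle$ and $E_N=\langle\mathbf 1_A-\mathbf 1_{C_{p_\mathbf x}},\mathcal D(\mathbf x,\mathbf r)\Psi_{\scriptscriptstyle K}^{\scriptscriptstyle N}\rangle$, where $C_{p_\mathbf x}$ is the cap from \autoref{lemma1}. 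First I would record that $C_{p_\mathbf x}$ sits relative to $\mathbf x$ exactly as the circular case requires: since $p_\mathbf x$ minimizes $t\mapsto d(\mathbf x,\mathbf v(t))$ over $[0,L]$ and lies in $[a,b]_\delta$, an interval interior to $[a,b]$, the minimizing geodesic from $\mathbf x$ to $\mathbf v(p_\mathbf x)$ meets $\mathbf v'(p_\mathbf x)=\mathbf u'(0)$ orthogonally, hence meets $\partial C_{p_\mathbf x}$ orthogonally at $\mathbf v(p_\mathbf x)$; thus $\mathbf v(p_\mathbf x)$ is the point of $\partial C_{p_\mathbf x}$ nearest $\mathbf x$, $d(\mathbf x,\partial C_{p_\mathbf x})=d(\mathbf x,\partial A)$, the unit tangent of $\partial C_{p_\mathbf x}$ there is $\mathbf v'(p_\mathbf x)$, and by the second property of \autoref{lemma1} the orientation agrees with that of $\mathbf 1_A$. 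I would also note that the opening angle of $C_{p_\mathbf x}$ equals $\arcsin(\|\mathbf v''(p_\mathbf x)\|_2^{-1})\in[\phi^\ast,\pi/2]$, which is bounded away from $0$ and $\pi$ uniformly in $\mathbf x$; this uniformity is what allows the constants taken from \citep{bib25} to be independent of $\mathbf x$ on $\mathcal N(\mathbf v([a,b]_\delta))$.

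For the circular term $M_N$ I would invoke \citep[Theorem~3.1]{bib25} applied to $C_{p_\mathbf x}$: it supplies a nonempty open interval $I\subset(0,\infty)$ and constants $c_1',c_2',N_0'>0$, depending only on $\kappa,\phi^\ast,K$ (and on $\ell$ for $c_2'$), such that $\lvert M_N\rvert\ge 2c_1'\,\lvert\chi_{\scriptscriptstyle K}(d_\mathbf x(\mathbf r,\mathbf v'(p_\mathbf x)))\rvert$ whenever $Nd(\mathbf x,\partial A)\in I$ and $N\ge N_0'$, and $\lvert M_N\rvert\le c_2'(1+Nd(\mathbf x,\partial A))^{-\ell}$ under the stated smoothness of $\kappa$; it is this bound for circles that forces the exponent $\lceil\ell(\ell+4)/2\rceil$ appearing in $q$.

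The substantive step is the estimate of $E_N$. I would set $r_N=N^{-(\ell+2)/(\ell+3)}$; the hypothesis $N>(2/d_\delta)^{(1-1/(\ell+3))^{-1}}$ is exactly $r_N<d_\delta/2$, which is what makes \eqref{eq9} available on $C(\mathbf x,r_N)$. Splitting $E_N$ over the three regions $C(\mathbf x,r_N)$, $C(\mathbf x,d_\delta/2)\setminus C(\mathbf x,r_N)$ and $\mathbb S^2\setminus C(\mathbf x,d_\delta/2)$: on the first I would bound $\lvert\mathcal D(\mathbf x,\mathbf r)\Psi_{\scriptscriptstyle K}^{\scriptscriptstyle N}\rvert\le\|\Psi_{\scriptscriptstyle K}^{\scriptscriptstyle N}\|_\infty\le cN^2$ (a consequence of \eqref{wavelets bound 1}) and use \eqref{eq9} to get $\le cN^2(\sup_{[a,b]}\|\mathbf v'''\|_2+1)\,r_N^4$, observing that this contribution vanishes unless $d(\mathbf x,\partial A)\le r_N$ (otherwise $C(\mathbf x,r_N)$ meets neither $\partial A$ nor $\partial C_{p_\mathbf x}$, on which $\mathbf 1_A$ and $\mathbf 1_{C_{p_\mathbf x}}$ are then constant and equal, $\mathbf x$ lying on the common normal on the same side), so that for the chosen $r_N$ one has $N^2 r_N^4\le c_\ell(1+Nd(\mathbf x,\partial A))^{-\ell}$; on the third I would bound $\lvert\mathbf 1_A-\mathbf 1_{C_{p_\mathbf x}}\rvert\le1$ and integrate the wavelet tail via \eqref{wavelets bound 2}, obtaining $\le c\,(Nd_\delta)^{2-(q+1-K)}$, which lies below $(1+Nd(\mathbf x,\partial A))^{-\ell}$ because $d(\mathbf x,\partial A)\le d_\delta/2\le1$ there, $Nd_\delta>2$, and $q+1-K$ is large; on the middle annulus I would again exploit that, by the geometry underlying \eqref{eq9}, $A\triangle C_{p_\mathbf x}$ is a thin sliver of $\omega$-measure $\le c(\sup_{[a,b]}\|\mathbf v'''\|_2+1)\rho^4$ inside each $C(\mathbf x,\rho)$, pairing this through a dyadic decomposition into shells $2^j r_N\le d(\,\cdot\,,\mathbf x)<2^{j+1}r_N$ with the decay \eqref{wavelets bound 2}; summing the geometric series and using $d(\mathbf x,\partial A)\le\phi^\ast/4$ and $d_\delta\le1$ (to keep the constant free of $d_\delta$) then yields, altogether, $\lvert E_N\rvert\le c(\kappa,\phi^\ast,K,\ell)\,(\sup_{[a,b]}\|\mathbf v'''\|_2+1)\,(1+Nd(\mathbf x,\partial A))^{-\ell}$.

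Adding $M_N$ and $E_N$ gives \eqref{wavelets asympt3} with $c_2=c_2'+c$. For \eqref{wavelets asympt2} I would note that $Nd(\mathbf x,\partial A)\in I$ forces $d(\mathbf x,\partial A)=O(1/N)$, so the $E_N$-estimate above in fact yields $\lvert E_N\rvert\le c\,(\sup_{[a,b]}\|\mathbf v'''\|_2+1)\,N^{-\varepsilon}$ for some fixed $\varepsilon>0$; choosing $N_0\ge N_0'$ large enough in terms of $\kappa,\phi^\ast,K,d_\delta$ and $\sup_{[a,b]}\|\mathbf v'''\|_2$ makes $\lvert E_N\rvert$ small next to the leading term $2c_1'\lvert\chi_{\scriptscriptstyle K}(\cdot)\rvert$ of $M_N$, which gives \eqref{wavelets asympt2} with $c_1=c_1'$ and $I$ as in the second step. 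I expect the delicate point to be precisely this bookkeeping for $E_N$: choosing $r_N$, treating the three spatial regimes, and balancing the $r^4$-gain of \autoref{lemma1} against the polynomial localization of $\Psi_{\scriptscriptstyle K}^{\scriptscriptstyle N}$ so that the total error decays like $(1+Nd(\mathbf x,\partial A))^{-\ell}$ uniformly over $\mathcal N(\mathbf v([a,b]_\delta))$ with constants independent of $\mathbf x,\mathbf r,N$ and of $d_\delta$.
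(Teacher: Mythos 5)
Your overall architecture is the paper's: compare $\mathbf 1_A$ with the osculating cap $C_{p_\mathbf x}$ of \hyperref[lemma1]{Lemma~\ref*{lemma1}}, feed the cap term into \autoref{thm3}, and control the error using \eqref{eq9} together with the localization bounds. Your lower-bound argument and your near-field error estimate (the region $C(\mathbf x,r_N)$, where $\|\Psi_{\scriptscriptstyle K}^{\scriptscriptstyle N}\|_\infty\le cN^2$ is paired with the $r^4$-gain of \eqref{eq9}) are essentially the paper's \hyperref[lemma4]{Lemma~\ref*{lemma4}}, and the exponent bookkeeping there closes. The difference is in the far regime: the paper never carries the cap comparison out to large distances. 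When $d(\mathbf x,\partial A)\ge N^{-\varepsilon}$ with $\varepsilon=1-2/(\ell+4)$, it bounds $\lvert\langle\mathbf 1_A,\mathcal D(\mathbf x,\mathbf r)\Psi_{\scriptscriptstyle K}^{\scriptscriptstyle N}\rangle\rvert$ directly by the wavelet tail outside $C(\mathbf x,N^{-\varepsilon})$ (passing to $\mathbf 1_{\mathbb S^2\setminus A}$ via the zero-mean property when $\mathbf x\in A$), obtaining $cN^{(1-\varepsilon)(1+K-q)}\le c_\ell N^{-\ell}$ with no $d_\delta$ anywhere. You instead keep $E_N$ everywhere and must estimate it over $\mathbb S^2\setminus C(\mathbf x,d_\delta/2)$ by the trivial bound $\lvert\mathbf 1_A-\mathbf 1_{C_{p_\mathbf x}}\rvert\le 1$, and this is where the argument breaks.

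The gap is your claim that $d(\mathbf x,\partial A)\le d_\delta/2$ on $\mathcal N(\mathbf v([a,b]_\delta))$. That is not part of the setup: \eqref{neighborhood} only gives $d(\mathbf x,\partial A)\le\phi^\ast/4$, while $d_\delta$ in \eqref{d_delta} can be arbitrarily small compared with $\phi^\ast$ (it is capped by $\tfrac{\delta}{5}\sin(\phi^\ast/2)$ and by how closely the rest of the curve approaches the segment). Your outer tail is of size $(Nd_\delta)^{-m}$ with $m=q-K-1=\max(4,\lceil\ell(\ell+4)/2\rceil)$, and it must be dominated by $c_2(1+Nd(\mathbf x,\partial A))^{-\ell}$ with $c_2$ independent of $d_\delta$, hence in the worst case by $cN^{-\ell}$. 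Under the stated threshold, which only guarantees $d_\delta^{-1}<\tfrac12 N^{(\ell+2)/(\ell+3)}$, this forces $m\ge\ell(\ell+3)$, which already fails at $\ell=2$ ($m=6$ versus $\ell(\ell+3)=10$): taking $d_\delta\sim 2N^{-4/5}$ and $d(\mathbf x,\partial A)=\phi^\ast/4$ gives an outer contribution $\sim N^{-6/5}$ against a target of $\sim N^{-2}$. So either $q$ must be enlarged, or — the paper's route — the case $d(\mathbf x,\partial A)\ge N^{-\varepsilon}$ must be handled without subtracting the cap at all, so that the tail integral starts at radius $N^{-\varepsilon}$ rather than at $d_\delta/2$ and the exponent $(1-\varepsilon)(q-1-K)\ge\ell$ is exactly what the choice $q=K+\max(5,1+\lceil\ell(\ell+4)/2\rceil)$ and $\varepsilon=1-2/(\ell+4)$ are designed to deliver. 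Your identification of $r_N=N^{-(\ell+2)/(\ell+3)}$ from the stated threshold is natural, but it is precisely this larger $\varepsilon$ that prevents the far-field exponents from closing.
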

\noindent Roughly speaking, \autoref{thm1} states that the coefficients $\langle \mathbf{1}_A,  \mathcal{D}(\mathbf{x}, \mathbf{r}) \Psi_{ \scriptscriptstyle K}^{\scriptscriptstyle N} \rangle$ peak when both the position $\mathbf{x}$ and orientation $\mathbf{r}$ of the wavelet somewhat match the position $\mathbf{v}(p_\mathbf{x})$ and orientation $\mathbf{v}'(p_\mathbf{x})$ of the nearest edge. Furthermore, these peaks remain stable in magnitude and move closer towards the boundary $\partial A$ as $N$ increases. Away from the edge, the inner products decay rapidly.

The following theorem provides us with more insight on the structure of the frame coefficients in the vicinity of a singularity. Here, for $\mathbf{x}\in \mathbb{S}^2$, we make use of the notation
\begin{equation*}
(-1)^{\mathbf{x}\notin A}= \begin{cases}
1, \quad &\text{if }\mathbf{x}\in A,\\
-1,  &\text{if }\mathbf{x}\notin A, 
\end{cases}
\end{equation*}
as well as $(-1)^{\mathbf{x}\in A} = -(-1)^{\mathbf{x}\notin A}$.
\begin{theorem}\label{thm2}
Let $\Psi_{ \scriptscriptstyle K}^{\scriptscriptstyle N}$ be the directional wavelet defined in \eqref{directional wavelets} with $\kappa \in C^{3q-1}([0, \infty))$, or $\kappa \in C^q([0, \infty))$ if $K=1$, where $q= K+5$. Then
\begin{align}\label{wavelets asympt1}
&\langle \mathbf{1}_A,  \mathcal{D}(\mathbf{x}, \mathbf{r}) \Psi_{ \scriptscriptstyle K}^{\scriptscriptstyle N} \rangle  = \sqrt{\frac{2^{-1}\pi^{-3}\| \mathbf{v}''(p_\mathbf{x})\|_2^{-1}}{ \sin((-1)^{\mathbf{x}\in A}d(\mathbf{x}, \partial A) + \arcsin(\| \mathbf{v}''(p_\mathbf{x})\|_2^{-1}))}} \nonumber \\
& \quad\quad \times \chi_{\scriptscriptstyle K}(d_\mathbf{x}(\mathbf{r}, \mathbf{v}'(p_\mathbf{x}))) \, ((-1)^{\mathbf{x}\notin A})^K  \nonumber\\
&  \quad\quad   \times \int_{1/2}^{2 } \kappa(t)\,  t^{-1}\,  \cos\!\left( N d(\mathbf{x}, \partial A) \cdot t + \frac{ 2 d(\mathbf{x}, \partial A) - \pi(1-(-1)^K)}{4}\right)\mathrm{d} t \nonumber \\
& \quad \quad  + R_{ \scriptscriptstyle K}^{\scriptscriptstyle N}(\mathbf{x}, \mathbf{r}),
\end{align}
where
\begin{equation*}
\lvert R_{ \scriptscriptstyle K}^{\scriptscriptstyle N}(\mathbf{x}, \mathbf{r})\rvert \leq c_0 \, N^{-1},
\end{equation*}
holds for every $(\mathbf{x}, \mathbf{r})\in UT\mathbb{S}^2$ with $\mathbf{x}\in  \mathcal{N}(\mathbf{v}([a, b]_\delta))$. Here, we have $c_0 = c_0(\kappa, \phi^\ast, K, d_\delta, \sup_{t \in [a, b]}\|\mathbf{v}'''(t)\|_2)$.
\end{theorem}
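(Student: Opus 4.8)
The plan is to reduce the computation of the frame coefficient $\langle \mathbf{1}_A, \mathcal{D}(\mathbf{x}, \mathbf{r})\Psi_{\scriptscriptstyle K}^{\scriptscriptstyle N}\rangle$ to the already-understood case of a spherical cap via \hyperref[lemma1]{Lemma~\ref*{lemma1}}, and then invoke the corresponding asymptotic formula from \citep{bib25} for circular edges. First I would split
\begin{equation*}
\langle \mathbf{1}_A, \mathcal{D}(\mathbf{x}, \mathbf{r})\Psi_{\scriptscriptstyle K}^{\scriptscriptstyle N}\rangle = \langle \mathbf{1}_{C_{p_\mathbf{x}}}, \mathcal{D}(\mathbf{x}, \mathbf{r})\Psi_{\scriptscriptstyle K}^{\scriptscriptstyle N}\rangle + \langle \mathbf{1}_A - \mathbf{1}_{C_{p_\mathbf{x}}}, \mathcal{D}(\mathbf{x}, \mathbf{r})\Psi_{\scriptscriptstyle K}^{\scriptscriptstyle N}\rangle,
\end{equation*}
where $C_{p_\mathbf{x}}$ is the osculating cap supplied by \hyperref[lemma1]{Lemma~\ref*{lemma1}}. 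The second term is an error term: I would bound it by splitting the sphere into $C(\mathbf{x}, r)$ and its complement for a well-chosen radius $r = r_N$ (something like $r_N = N^{-1+1/(\ell+3)}$, matching the threshold appearing in \autoref{thm1}). On $C(\mathbf{x}, r_N)$ one uses the $L^1$-bound \eqref{eq9} from \hyperref[lemma1]{Lemma~\ref*{lemma1}} together with the sup-norm bound $\|\mathcal{D}(\mathbf{x}, \mathbf{r})\Psi_{\scriptscriptstyle K}^{\scriptscriptstyle N}\|_\infty \leq c N^2$ (a consequence of \eqref{wavelets bound 2} since $q \geq K$); on the complement, since $\mathbf{1}_A - \mathbf{1}_{C_{p_\mathbf{x}}}$ is supported within geodesic distance $\lesssim 1$ of $\partial A$ and hence of $\mathbf{x}$, one uses the rapid decay in \eqref{wavelets bound 2} at distances $\gtrsim r_N$ from $\mathbf{x}$. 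Balancing the two contributions against the required error size $N^{-1}$ (for \autoref{thm2}) determines the smoothness exponent $q = K+5$.

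The main term $\langle \mathbf{1}_{C_{p_\mathbf{x}}}, \mathcal{D}(\mathbf{x}, \mathbf{r})\Psi_{\scriptscriptstyle K}^{\scriptscriptstyle N}\rangle$ is precisely a frame coefficient of the indicator function of a spherical cap, which is the object analyzed in \citep[Theorem~3.1]{bib25}. The cap $C_{p_\mathbf{x}}$ has opening angle determined by its curvature: since $\partial C_{p_\mathbf{x}}$ has the same curvature as $\mathbf{v}$ at $p_\mathbf{x}$, namely $\|\mathbf{v}''(p_\mathbf{x})\|_2$, the cap has opening angle $\arccot(\|\mathbf{v}''(p_\mathbf{x})\|_2 \cot(\text{something}))$ — more precisely its radius $\rho$ satisfies $\cot \rho = $ (signed curvature), so that $\sin((-1)^{\mathbf{x}\in A}d(\mathbf{x},\partial A)+\arcsin(\|\mathbf{v}''(p_\mathbf{x})\|_2^{-1}))$ emerges as the sine of the latitude of $\mathbf{x}$ in the coordinate frame centered at the cap's pole. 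Here I would need to track carefully: (i) that $d(\mathbf{x}, \partial C_{p_\mathbf{x}}) = d(\mathbf{x}, \partial A) = d(\mathbf{x}, \mathbf{v}(p_\mathbf{x}))$ up to the approximation error (the foot point $\mathbf{v}(p_\mathbf{x})$ lies on $\partial C_{p_\mathbf{x}}$ by construction, and the minimality transfers since $\mathbf{v}'(p_\mathbf{x}) = \mathbf{u}'(0)$ means $\mathbf{x}$ lies on the geodesic normal to both curves at $\mathbf{v}(p_\mathbf{x})$); (ii) that the angle $d_\mathbf{x}(\mathbf{r}, \mathbf{u}'(0)) = d_\mathbf{x}(\mathbf{r}, \mathbf{v}'(p_\mathbf{x}))$, so the directionality function $\chi_{\scriptscriptstyle K}$ takes exactly the stated argument; and (iii) the sign/parity factors $((-1)^{\mathbf{x}\notin A})^K$ and the phase shift $-\pi(1-(-1)^K)/4$, which come from whether $\mathbf{x}$ is inside or outside the cap and the parity of $K$ entering through $\eta$ in the directionality component. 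Feeding these identifications into the asymptotic formula of \citep{bib25} produces the displayed leading term, with its $\int_{1/2}^2 \kappa(t) t^{-1}\cos(\cdots)\,\mathrm{d}t$ factor arising from a stationary-phase / Laplace-type expansion of the Legendre sum at the jump.

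The hard part will be step (i)--(iii) above: justifying that the geometric data of the osculating cap (foot point, normal direction, the relevant latitude, and the inside/outside sign) agree with those of $A$ to within an error that, after multiplication by the wavelet's $N^2$ mass and integration, stays $O(N^{-1})$. The foot-point and tangent-direction agreement is exact by the $C^2$-matching in \hyperref[lemma1]{Lemma~\ref*{lemma1}}, so the genuine perturbation is only the $C^3$-order discrepancy already controlled by \eqref{eq9}; the real care is in verifying that the quantity $d(\mathbf{x}, \partial C_{p_\mathbf{x}})$ used inside the \citep{bib25} formula can be replaced by $d(\mathbf{x}, \partial A)$ without generating an error larger than the remainder, and that the uniformity over $\mathbf{x} \in \mathcal{N}(\mathbf{v}([a,b]_\delta))$ is preserved — this is where the constant $d_\delta$ and the lower bound on $N$ do their work, ensuring $C(\mathbf{x}, r_N)$ never reaches the non-smooth part of $\partial A$. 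Once these reductions are in place, \autoref{thm2} follows by assembling the main term from \citep[Theorem~3.1]{bib25} with the error estimate from the decomposition above, and \autoref{thm1} follows as a corollary by bounding $|\chi_{\scriptscriptstyle K}|$ below on a suitable interval $I$ for the lower estimate \eqref{wavelets asympt2}, and by a cruder version of the same splitting (with $\ell$ in place of $1$) for the upper estimate \eqref{wavelets asympt3}.
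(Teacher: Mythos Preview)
Your proposal is correct and follows essentially the same route as the paper: split off the osculating cap $C_{p_\mathbf{x}}$ from \hyperref[lemma1]{Lemma~\ref*{lemma1}}, bound $\langle \mathbf{1}_A-\mathbf{1}_{C_{p_\mathbf{x}}},\mathcal{D}(\mathbf{x},\mathbf{r})\Psi_{\scriptscriptstyle K}^{\scriptscriptstyle N}\rangle$ by the near/far decomposition at radius $N^{-\varepsilon}$ (the paper packages this as \hyperref[lemma4]{Lemma~\ref*{lemma4}} and takes $\varepsilon=3/4$, which yields exactly $N^{2-4\varepsilon}=N^{-1}$ and forces $q\geq K-3+2/(1-\varepsilon)=K+5$), and then read off the main term from the cap result \citep[Theorem~3.1]{bib25} (restated here as \autoref{thm3}). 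One small simplification: the equality $d(\mathbf{x},\partial C_{p_\mathbf{x}})=d(\mathbf{x},\partial A)$ is \emph{exact}, not approximate --- since $\mathbf{v}(p_\mathbf{x})\in\partial C_{p_\mathbf{x}}$ and $\mathbf{u}'(0)=\mathbf{v}'(p_\mathbf{x})$, the point $\mathbf{x}$ lies on the common geodesic normal at $\mathbf{v}(p_\mathbf{x})$, so the nearest point on the circle $\partial C_{p_\mathbf{x}}$ is $\mathbf{v}(p_\mathbf{x})$ itself --- and the paper simply asserts this without further comment.
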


\autoref{thm2} shows how the local geometry of the boundary $\partial A$ is reflected in the frame coefficients at high frequencies, that is, when $N$ becomes large. Most notably, the orientational dependency of the dominating term given by \eqref{wavelets asympt1} is completely characterized by the directionality function $\chi_{\scriptscriptstyle K}$.  Also, the parameter $N$ only appears in the integral expression, which can be viewed as a function in $d(\mathbf{x}, \partial A)$ that essentially gets dilated as $N$ increases.

\begin{remark}
We note that the results presented above can be applied to all functions which are (locally) of the form $g + \mathbf{1}_A$, where $g \in C(\mathbb{S}^2)$. More precisely,
\begin{equation*}
\lim_{N \rightarrow \infty} \sup_{(\mathbf{x}, \mathbf{r})\in UT\mathbb{S}^2}  \, \lvert \langle g, \mathcal{D}(\mathbf{x}, \mathbf{r}) \Psi_{ \scriptscriptstyle K}^{\scriptscriptstyle N} \rangle\rvert = 0
\end{equation*}
and therefore the upper and lower bounds \eqref{wavelets asympt2} and \eqref{wavelets asympt3} show that jump discontinuities along smooth curves can still be detected in this case. For more details on this matter we refer to the discussion in \cite[Remark~1]{bib25}. 
\end{remark}

In order to prove \autoref{thm1} and \autoref{thm2}, we will need the following result, which deals with the special case when $A$ is a spherical cap.
\begin{theorem}\label{thm3}
Let $A=C(\mathbf{z}, \phi) $ with $\phi^\ast \leq \phi \leq \pi-\phi^\ast$ and $\Psi_{ \scriptscriptstyle K}^{\scriptscriptstyle N}$ be the directional wavelet defined in \eqref{directional wavelets} with $\kappa \in C^{q}([0, \infty))$ for some $q\in \mathbb{N}$. Then
\begin{align}\label{wavelets asympt4}
&\langle \mathbf{1}_A,  \mathcal{D}(\mathbf{x}, \mathbf{r}) \Psi_{ \scriptscriptstyle K}^{\scriptscriptstyle N} \rangle \nonumber \\
& \quad \quad  =  \sqrt{\frac{ \sin \phi}{2 \pi^3 \sin((-1)^{\mathbf{x}\in A}d(\mathbf{x}, \partial A) + \phi)}} \,\chi_{\scriptscriptstyle K}(d_\mathbf{x}(\mathbf{r}, \mathbf{v}'(p_\mathbf{x}))) \, ((-1)^{\mathbf{x}\notin A})^K \nonumber\nonumber \\
&\quad \quad  \quad \times \int_{1/2}^{2 } \kappa(t)\,  t^{-1}\,  \cos\!\left( N d(\mathbf{x}, \partial A) \cdot t + \frac{ 2d(\mathbf{x}, \partial A) - \pi(1-(-1)^K)}{4}\right)\mathrm{d} t \nonumber \\
& \quad \quad  \quad +R_{ \scriptscriptstyle K}^{\scriptscriptstyle N}(\mathbf{x}, \mathbf{r}),
\end{align}
where
\begin{equation*}
\lvert R_{ \scriptscriptstyle K}^{\scriptscriptstyle N}(\mathbf{x}, \mathbf{r})\rvert \leq c_0 \, N^{-1},
\end{equation*}
holds for all $(\mathbf{x}, \mathbf{r})\in UT\mathbb{S}^2$ with $d(\mathbf{x}, \partial A)\leq \phi^\ast/4$. Furthermore, there exist constants $c_1$, $c_2$, $N_0>0$ as well as some nonempty open interval $I \subset (0, \infty)$, such that for all $(\mathbf{x}, \mathbf{r})\in UT\mathbb{S}^2$ with $d(\mathbf{x}, \partial A)\leq \phi^\ast/4$ it holds that
\begin{equation*}
\lvert \langle \mathbf{1}_A, \mathcal{D}(\mathbf{x}, \mathbf{r} ) \Psi_{\scriptscriptstyle K}^{\scriptscriptstyle N} \rangle \rvert \geq c_1 \,  \lvert \chi_{\scriptscriptstyle K}(d_\mathbf{x}(\mathbf{r}, \mathbf{v}'(p_\mathbf{x}))) \rvert \quad \text{if } N d(\mathbf{x}, \partial A) \in I, \; N \geq N_0,
\end{equation*}
and
\begin{equation*}
\lvert \langle \mathbf{1}_A, \mathcal{D}(\mathbf{x}, \mathbf{r}) \Psi_{ \scriptscriptstyle K}^{\scriptscriptstyle N} \rangle \rvert \leq \frac{c_2 }{(1+N d( \mathbf{x}, \partial A))^q}.
\end{equation*}
The dependencies of the constants are $c_0 = c_0(\kappa, \phi^\ast, K)$, $c_1=c_1(\kappa, \phi^\ast, K)$, $c_2 = c_2(\kappa, \phi^\ast, K, q)$, $N_0 = N_0(\kappa, \phi^\ast, K)$ and $I = I(\kappa, K)$.
\end{theorem}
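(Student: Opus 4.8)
The plan is to obtain all three assertions from the circular-edge analysis of \citep{bib25}, to which the present situation reduces after a rotation. Since $\mathbf{1}_{C(\mathbf{z},\phi)}$ has a jump of height one along the circle $\partial C(\mathbf{z},\phi)$, the claim is, up to a translation of notation, \citep[Theorem~3.1]{bib25} applied to $f=\mathbf{1}_A$; the main work is to identify the leading term explicitly. First I would normalise the configuration: writing $\mathbf{R}=\mathbf{R}(\mathbf{x},\mathbf{r})$ and using the rotation invariance of the inner product and of the surface measure gives $\langle\mathbf{1}_A,\mathcal{D}(\mathbf{x},\mathbf{r})\Psi_{\scriptscriptstyle K}^{\scriptscriptstyle N}\rangle=\langle\mathbf{1}_{\mathbf{R}^{-1}A},\Psi_{\scriptscriptstyle K}^{\scriptscriptstyle N}\rangle$, so one may assume $\mathbf{x}=\mathbf{e}_3$ and, after a final rotation about $\mathbf{e}_3$, that the cap $\mathbf{R}^{-1}A$ has its center on the meridian $\varphi=0$ at geodesic distance $(-1)^{\mathbf{x}\in A}d(\mathbf{x},\partial A)+\phi$ from $\mathbf{e}_3$. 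Then $\Psi_{\scriptscriptstyle K}^{\scriptscriptstyle N}$ sits at the north pole, the foot point $\mathbf{v}(p_\mathbf{x})$ has colatitude $d(\mathbf{x},\partial A)$, and the only surviving angular parameter is $d_\mathbf{x}(\mathbf{r},\mathbf{v}'(p_\mathbf{x}))$, the rotation-invariant offset between the wavelet's direction and the tangent of the circle at the foot point. (As in the discussion after \eqref{eq3}, $p_\mathbf{x}$ is unique because $d(\mathbf{x},\partial A)\le\phi^\ast/4$ while the curvature of $\partial A$ equals $1/\sin\phi\le 1/\sin\phi^\ast$.)

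In this normalised picture the inner product is genuinely one-dimensional. Expanding $\Psi_{\scriptscriptstyle K}^{\scriptscriptstyle N}$ as in the proof of \eqref{wavelets bound 1} yields $\Psi_{\scriptscriptstyle K}^{\scriptscriptstyle N}(\theta,\varphi)=\sum_{\lvert k\rvert<K}c_k\,e^{\mathrm{i}k\varphi}\,b_k(\theta)$ with only $2K-1$ azimuthal modes, and integrating over the cap in the coordinates $(\theta,\varphi)$ reduces the $\varphi$-integration to explicit functions of the cap's angular half-width $\Phi(\theta)$, leaving radial integrals involving the sums $\sum_n\tfrac{2n+1}{4\pi}\kappa(n/N)\sqrt{(n-k)!/(n+k)!}\,P_n^k(\cos\theta)$. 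Carrying out the Darboux/stationary-phase asymptotics of these sums near the edge, as in \citep{bib25}, produces the leading term in \eqref{wavelets asympt4}: the amplitude $\sqrt{\sin\phi/(2\pi^3\sin((-1)^{\mathbf{x}\in A}d(\mathbf{x},\partial A)+\phi))}$ records the local geometry of the circular edge seen from $\mathbf{x}$ (the Euclidean radius $\sin\phi$ of the circle together with the sine of the foot point's colatitude in the normalised picture), the factor $\chi_{\scriptscriptstyle K}(d_\mathbf{x}(\mathbf{r},\mathbf{v}'(p_\mathbf{x})))\,((-1)^{\mathbf{x}\notin A})^K$ comes from summing the azimuthal series against the indicator together with the inside/outside dichotomy and the parity of $K$, and the phase $(2d(\mathbf{x},\partial A)-\pi(1-(-1)^K))/4$ collects the $\pi/4$-type phases of the radial asymptotics and a term of order $\pi/2$. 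The hypothesis $\phi^\ast\le\phi\le\pi-\phi^\ast$ with $d(\mathbf{x},\partial A)\le\phi^\ast/4$ keeps $\sin\phi$ and $\sin((-1)^{\mathbf{x}\in A}d(\mathbf{x},\partial A)+\phi)$ bounded away from $0$, which makes the amplitude and the remainder $R_{\scriptscriptstyle K}^{\scriptscriptstyle N}$ uniform. If $K=1$ the wavelet is isotropic, and one may use the sharper bound of \citep{bib16} in place of \eqref{wavelets bound 1}, which is why only $\kappa\in C^q([0,\infty))$ is required.

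Finally I would read the lower and upper bounds off \eqref{wavelets asympt4}. As a function of $s=Nd(\mathbf{x},\partial A)$, the oscillatory integral $\int_{1/2}^{2}\kappa(t)t^{-1}\cos(st+\cdots)\,\mathrm{d}t$ is continuous and not identically zero; fixing a nonempty open interval $I\subset(0,\infty)$ on which it is bounded below in modulus and choosing $N_0$ so large that $\lvert R_{\scriptscriptstyle K}^{\scriptscriptstyle N}\rvert$ is dominated by $c_1\lvert\chi_{\scriptscriptstyle K}(d_\mathbf{x}(\mathbf{r},\mathbf{v}'(p_\mathbf{x})))\rvert$ whenever that directionality factor is nonzero and $s\in I$ gives the lower bound (which is trivial when the factor vanishes). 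For the upper bound one uses that in the one-dimensional reduction the radial integrals carry an oscillatory factor $\int_{1/2}^{2}\kappa(t)(\cdots)\cos(N\theta\,t+\cdots)\,\mathrm{d}t$ which is $\mathcal{O}((1+N\theta)^{-q})$ by $q$-fold integration by parts — the boundary terms vanish since $\kappa$ and its derivatives up to order $q$ vanish at $1/2$ and $2$ — while the cap geometry localises the $\theta$-integration near $\theta=d(\mathbf{x},\partial A)$ and the remaining amplitudes are uniformly bounded, whence the decay $(1+Nd(\mathbf{x},\partial A))^{-q}$. The step requiring the most care is the middle one: matching every sign, the inside/outside alternative, and the precise amplitude and phase of \eqref{wavelets asympt4} with the output of the asymptotic analysis in \citep{bib25}, and checking that all constants and the error term are uniform over the admissible $(\mathbf{x},\mathbf{r})$.
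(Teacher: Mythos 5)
Your proposal is correct and matches the paper's approach: the paper's entire proof of this theorem is a two-sentence citation of \cite[Theorem~3.1]{bib25} together with the remark that translating from Euler angles to the unit tangent bundle parameterization is straightforward, which is exactly the reduction you identify at the outset ("up to a translation of notation, \citep[Theorem~3.1]{bib25} applied to $f=\mathbf{1}_A$"). The additional stationary-phase and integration-by-parts details you sketch are a plausible reconstruction of the argument in the cited reference, but the paper itself does not reproduce them.
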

\begin{proof}
This statement was proven in \cite[Theorem~3.1]{bib25} in terms of Euler angles. An adaptation of the formulas with respect to the unit tangent bundle parameterization of the rotation operator can be achieved in a straightforward manner.
\end{proof}

Another important intermediate result, which we will use to prove our main theorems, is given by the following lemma.
\begin{lemma}\label{lemma4}
Let $\varepsilon \in [3/5,1)$ and $\Psi_{ \scriptscriptstyle K}^{\scriptscriptstyle N}$ be the directional wavelet defined in \eqref{directional wavelets} for $\kappa \in C^{3q-1}([0, \infty))$, or $\kappa \in C^q([0, \infty))$ if $K=1$, with $q\geq K-3 +2/(1-\varepsilon)$. Then there exists a constant $c=c(\kappa, \phi^\ast,K, \varepsilon)>0$ such that
\begin{equation}\label{eq15}
\lvert \langle \mathbf{1}_A - \mathbf{1}_{C_{p_\mathbf{x}}}, \mathcal{D}(\mathbf{x}, \mathbf{r}) \Psi_{\scriptscriptstyle K}^{\scriptscriptstyle N} \rangle \rvert \leq  c\left( \sup_{t \in [a, b]} \| \mathbf{v}'''(t) \|_2 +1 \right)  N^{2-4\varepsilon}
\end{equation}
holds for all $(\mathbf{x}, \mathbf{r}) \in UT\mathbb{S}^2$ with $ \mathbf{x} \in  \mathcal{N}(\mathbf{v}([a, b]_\delta))$, provided that $N^{-\varepsilon} < d_\delta/2$. Here, $C_{p_\mathbf{x}}$ is the corresponding spherical cap from \hyperref[lemma1]{Lemma~\ref*{lemma1}}.
\end{lemma}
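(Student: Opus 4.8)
The plan is to localize: split $\mathbb{S}^2$ into the geodesic cap $C(\mathbf{x},r)$ of radius $r:=N^{-\varepsilon}$ centered at $\mathbf{x}$ and its complement, and estimate the two resulting contributions to $\langle \mathbf{1}_A-\mathbf{1}_{C_{p_\mathbf{x}}},\mathcal{D}(\mathbf{x},\mathbf{r})\Psi_{\scriptscriptstyle K}^{\scriptscriptstyle N}\rangle$ by entirely different mechanisms. On the cap, $\mathbf{1}_A$ and $\mathbf{1}_{C_{p_\mathbf{x}}}$ are $L^1$-close by \hyperref[lemma1]{Lemma~\ref*{lemma1}}; off the cap, the wavelet is negligible because of its spatial decay. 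Concretely, I would fix $(\mathbf{x},\mathbf{r})\in UT\mathbb{S}^2$ with $\mathbf{x}\in\mathcal{N}(\mathbf{v}([a,b]_\delta))$ and $N$ large enough that $r=N^{-\varepsilon}<d_\delta/2$, and write
\begin{equation*}
\langle \mathbf{1}_A-\mathbf{1}_{C_{p_\mathbf{x}}},\mathcal{D}(\mathbf{x},\mathbf{r})\Psi_{\scriptscriptstyle K}^{\scriptscriptstyle N}\rangle=\int_{C(\mathbf{x},r)}(\mathbf{1}_A-\mathbf{1}_{C_{p_\mathbf{x}}})\,\overline{\mathcal{D}(\mathbf{x},\mathbf{r})\Psi_{\scriptscriptstyle K}^{\scriptscriptstyle N}}\,\mathrm{d}\omega+\int_{\mathbb{S}^2\setminus C(\mathbf{x},r)}(\cdots)\,\mathrm{d}\omega.
\end{equation*}

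For the near part, I would estimate the wavelet in sup-norm: since $\mathcal{D}(\mathbf{x},\mathbf{r})$ is an $L^\infty$-isometry and the hypothesis $q\ge K-3+2/(1-\varepsilon)$ forces $q\ge K+2$, the localization bound \eqref{wavelets bound 2} (respectively its $K=1$ analogue) yields $\|\mathcal{D}(\mathbf{x},\mathbf{r})\Psi_{\scriptscriptstyle K}^{\scriptscriptstyle N}\|_\infty=\|\Psi_{\scriptscriptstyle K}^{\scriptscriptstyle N}\|_\infty\le cN^2$. Combined with \eqref{eq9} of \hyperref[lemma1]{Lemma~\ref*{lemma1}}, which is applicable precisely because $r<d_\delta/2$, the near part is bounded by $cN^2\int_{C(\mathbf{x},r)}\lvert\mathbf{1}_A-\mathbf{1}_{C_{p_\mathbf{x}}}\rvert\,\mathrm{d}\omega\le c\bigl(\sup_{t\in[a,b]}\|\mathbf{v}'''(t)\|_2+1\bigr)N^2r^4=c\bigl(\sup_{t\in[a,b]}\|\mathbf{v}'''(t)\|_2+1\bigr)N^{2-4\varepsilon}$.

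For the far part, I would bound $\lvert\mathbf{1}_A-\mathbf{1}_{C_{p_\mathbf{x}}}\rvert\le 1$ and exploit the rotation invariance of $\mathrm{d}\omega$ and of geodesic caps to rewrite the remaining integral as $\int_{\mathbb{S}^2\setminus C(\mathbf{e}_3,r)}\lvert\Psi_{\scriptscriptstyle K}^{\scriptscriptstyle N}(\theta,\varphi)\rvert\,\mathrm{d}\omega$, and then insert \eqref{wavelets bound 2}. Writing $m:=q+1-K$ and using the crude estimates $(1+N\theta)^{-m}\le(N\theta)^{-m}$ and $\sin\theta\le\theta$, this reduces to $cN^{2-m}\int_r^\pi\theta^{1-m}\,\mathrm{d}\theta$, which is $\le cN^{2-m}r^{2-m}=cN^{2-2\varepsilon-m(1-\varepsilon)}$ since $m\ge 3>2$ (a consequence of $\varepsilon\ge 3/5$). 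The hypothesis $q\ge K-3+2/(1-\varepsilon)$ is exactly the inequality $m\ge 2\varepsilon/(1-\varepsilon)$, i.e. $m(1-\varepsilon)\ge 2\varepsilon$, so $2-2\varepsilon-m(1-\varepsilon)\le 2-4\varepsilon$ and the far part is $\le cN^{2-4\varepsilon}\le c\bigl(\sup_{t\in[a,b]}\|\mathbf{v}'''(t)\|_2+1\bigr)N^{2-4\varepsilon}$. Adding the two pieces then gives \eqref{eq15} with a constant depending only on $\kappa,\phi^\ast,K,\varepsilon$.

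The individual ingredients — the $L^\infty$-isometry property, the pointwise localization bound, and the elementary tail integral — are routine; the only delicate point is the exponent bookkeeping. The cap radius $r=N^{-\varepsilon}$ has to balance two competing errors, the near-field error behaving like $N^2r^4$ and the far-field tail like $N^{2-m}r^{2-m}$, and the stated lower bound on $q$ (equivalently $m\ge 2\varepsilon/(1-\varepsilon)$) is precisely what makes both contributions land at the common rate $N^{2-4\varepsilon}$, while the restriction $\varepsilon\ge 3/5$ is what keeps $m>2$ so that the tail integral is of power type rather than logarithmic.
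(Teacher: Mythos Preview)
Your proof is correct and follows essentially the same route as the paper: split at radius $r=N^{-\varepsilon}$, control the near part by $\|\Psi_{\scriptscriptstyle K}^{\scriptscriptstyle N}\|_\infty\le cN^2$ together with the $r^4$-bound of Lemma~\ref{lemma1}, and control the far part via the localization estimate \eqref{wavelets bound 2} and the hypothesis on $q$. The only cosmetic differences are that the paper obtains $\|\Psi_{\scriptscriptstyle K}^{\scriptscriptstyle N}\|_\infty=\mathcal{O}(N^2)$ from the addition theorem rather than from \eqref{wavelets bound 2}, and handles the tail integral by integration by parts rather than your cruder bound $(1+N\theta)^{-m}\le(N\theta)^{-m}$; the exponent bookkeeping and final conclusion are identical.
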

\begin{proof}
Let $(\mathbf{x}, \mathbf{r}) \in UT\mathbb{S}^2$ with $ \mathbf{x} \in  \mathcal{N}(\mathbf{v}([a, b]_\delta)) $ and let $C_{p_\mathbf{x}}$ be the corresponding spherical cap from \hyperref[lemma1]{Lemma~\ref*{lemma1}}. We start with the simple estimate
\begin{align*}
\lvert \langle  \mathbf{1}_A -  \mathbf{1}_{C_{p_\mathbf{x}}},& \mathcal{D}(\mathbf{x}, \mathbf{r}) \Psi_{\scriptscriptstyle K}^{\scriptscriptstyle N} \rangle \rvert \\
&\leq \int_{C(\mathbf{x}, N^{-\varepsilon})} \lvert \mathcal{D}(\mathbf{x}, \mathbf{r}) \Psi_{\scriptscriptstyle K}^{\scriptscriptstyle N} (\mathbf{y}) \rvert \, \lvert \mathbf{1}_A(\mathbf{y}) - \mathbf{1}_{C_{p_\mathbf{x}}}(\mathbf{y}) \rvert \, \mathrm{d}\omega(\mathbf{y}) \\
& + \int_{\mathbb{S}^2\setminus C(\mathbf{x}, N^{-\varepsilon})} \lvert \mathcal{D}(\mathbf{x}, \mathbf{r}) \Psi_{\scriptscriptstyle K}^{\scriptscriptstyle N} (\mathbf{y}) \rvert \, \lvert \mathbf{1}_A(\mathbf{y}) - \mathbf{1}_{C_{p_\mathbf{x}}}(\mathbf{y}) \rvert \, \mathrm{d}\omega(\mathbf{y})\\
&= \mathrm{I}_1^{\scriptscriptstyle N} +  \mathrm{I}_2^{\scriptscriptstyle N}.
\end{align*}
The spatial localization bound of the directional wavelets stated in \eqref{wavelets bound 2}, or in \eqref{d_delta} if $K=1$, yields
\begin{align*}
\mathrm{I}_2^{\scriptscriptstyle N} & \leq \int_{N^{-\varepsilon}}^\pi \frac{c_q\,N^2 \sin \theta }{(1+N \theta)^{q+1-K}} \,  \mathrm{d}\theta \leq \int_{N^{-\varepsilon}}^{\pi/2} \frac{c_q\,N^2 \, \theta }{(1+N \theta)^{q+1-K}} \,  \mathrm{d}\theta.
\end{align*}
Thus, using integration by parts, we obtain
\begin{equation*}
\mathrm{I}_2^{\scriptscriptstyle N} \leq c_q \, N^{(1-\varepsilon)(1+K-q)} \leq c_q\, N^{2-4\varepsilon}.
\end{equation*}
Here, the second inequality follows from the assumption that $q\geq  K-3 +2/(1-\varepsilon)$. Additionally, if $ N^{-\varepsilon} < d_\delta/2$ the integral $\mathrm{I}_1^{\scriptscriptstyle N}$ can be estimated using \hyperref[lemma1]{Lemma~\ref*{lemma1}}. We obtain
\begin{equation*}
\mathrm{I}_1^{\scriptscriptstyle N} \leq c \left( \sup_{t \in [a, b]} \| \mathbf{v}'''(t) \|_2 +1 \right)\| \Psi_{\scriptscriptstyle K}^{\scriptscriptstyle N} \|_\infty  N^{-4\varepsilon}.
\end{equation*}
It is not hard to verify that $\| \Psi_{\scriptscriptstyle K}^{\scriptscriptstyle N} \|_\infty = \mathcal{O}(N^2)$ as $N \rightarrow \infty$. Indeed, since $\lvert Y_n^k(\mathbf{x}) \rvert \leq \sqrt{\frac{2n+1}{4\pi}}$, which is a direct consequence of the famous addition theorem for spherical harmonics, the latter follows easily from the definition \eqref{directional wavelets}. Consequently, \eqref{eq15} holds. 
\end{proof}

\begin{proof}[Proof of \autoref{thm1}]
Let $(\mathbf{x}, \mathbf{r}) \in UT\mathbb{S}^2$ with $ \mathbf{x} \in  \mathcal{N}(\mathbf{v}([a, b]_\delta)) $ and let $C_{p_\mathbf{x}}$ be the corresponding spherical cap from \hyperref[lemma1]{Lemma~\ref*{lemma1}}. By \hyperref[lemma4]{Lemma~\ref*{lemma4}}, setting $\varepsilon = 3/4$, we have
\begin{align*}
 &\lvert \langle \mathbf{1}_A, \mathcal{D}(\mathbf{x}, \mathbf{r}) \Psi_{\scriptscriptstyle K}^{\scriptscriptstyle N} \rangle -\langle \mathbf{1}_{C_{p_\mathbf{x}}}, \mathcal{D}(\mathbf{x}, \mathbf{r}) \Psi_{\scriptscriptstyle K}^{\scriptscriptstyle N} \rangle \rvert \nonumber\\
 & \qquad \qquad \qquad \qquad \leq c(\kappa, \phi^\ast, K) \left( \sup_{t \in [a, b]} \| \mathbf{v}'''(t)\|_2 +1\right) N^{-1},
\end{align*}
provided that $N^{-3/4}<d_\delta/2$.
The lower bound now follows from the lower bound for spherical caps in \autoref{thm3}.

Now, let $\varepsilon=1-2(4+\ell)^{-1}$. To prove the upper bound, we first consider the case $d( \mathbf{\mathbf{x}},  \partial A)\geq N^{-\varepsilon}$. If $\mathbf{x} \notin A$, we can, just like in the proof of \hyperref[lemma4]{Lemma~\ref*{lemma4}}, use the localization property of $\Psi_{\scriptscriptstyle K}^{\scriptscriptstyle N} $ to derive
\begin{equation}\label{eq16}
\lvert \langle \mathbf{1}_A, \mathcal{D}(\mathbf{x}, \mathbf{r}) \Psi_{ \scriptscriptstyle K}^{\scriptscriptstyle N} \rangle \rvert \leq  c_q \, N^{(1-\varepsilon)(1+K-q)} \leq \frac{c_{\ell}}{(1+N\pi)^\ell} \leq \frac{c_\ell}{(1+N d( \partial A, \mathbf{x} ))^\ell}.
\end{equation}
If, on the other hand, $\mathbf{x} \in A$, the fact that $\Psi_{\scriptscriptstyle K}^{\scriptscriptstyle N}$ has a zero mean implies
\begin{equation*}
\langle \mathbf{1}_A, \mathcal{D}(\mathbf{x}, \mathbf{r}) \Psi_{\scriptscriptstyle K}^{\scriptscriptstyle N} \rangle = -  \langle \mathbf{1}_{\mathbb{S}^2\setminus A}, \mathcal{D}(\mathbf{x}, \mathbf{r}) \Psi_{\scriptscriptstyle K}^{\scriptscriptstyle N} \rangle
\end{equation*}
and thus \eqref{eq16} still holds. 

Finally, let $d( \mathbf{x}, \partial A) < N^{-\varepsilon}$. One easily verifies that $q\geq  K-3 +2/(1-\varepsilon) = K+1+\ell$. Thus, \hyperref[lemma4]{Lemma~\ref*{lemma4}} yields
\begin{align*}
\lvert \langle \mathbf{1}_A, \mathcal{D}(\mathbf{x}, \mathbf{r}) \Psi_{\scriptscriptstyle K}^{\scriptscriptstyle N} \rangle \rvert \leq   \lvert \langle & \mathbf{1}_{C_{p_\mathbf{x}}}, \mathcal{D}(\mathbf{x}, \mathbf{r}) \Psi_{\scriptscriptstyle K}^{\scriptscriptstyle N} \rangle \rvert  +  c \left( \sup_{t \in [a, b]} \| \mathbf{v}'''(t) \|_2 +1 \right)  N^{2-4\varepsilon}.
\end{align*}
Now, the upper bound in \autoref{thm3} states that
\begin{equation*}
\lvert \langle \mathbf{1}_{C_{p_\mathbf{x}}}, \mathcal{D}(\mathbf{x}, \mathbf{r}) \Psi_{\scriptscriptstyle K}^{\scriptscriptstyle N} \rangle \rvert \leq \frac{c_\ell}{(1+N d(\mathbf{x}, \partial A))^\ell}.
\end{equation*}
Furthermore,
\begin{align*}
N^{2-4\varepsilon} \leq c(1+N^{1-\varepsilon})^{-\ell} \leq c(1+N d(\mathbf{x}, \partial A))^{-\ell},
\end{align*}
where we have used that $\varepsilon = 1-2(4+\ell)^{-1}$. This completes the proof.
\end{proof}

\begin{proof}[Proof of \autoref{thm2}]
For $\varepsilon = 3/4$, \hyperref[lemma4]{Lemma~\ref*{lemma4}} yields
\begin{equation*}
 \langle \mathbf{1}_{A}, \mathcal{D}(\mathbf{x}, \mathbf{r}) \Psi_{\scriptscriptstyle K}^{\scriptscriptstyle N} \rangle  = \langle \mathbf{1}_{C_{p_\mathbf{x}}}, \mathcal{D}(\mathbf{x}, \mathbf{r}) \Psi_{\scriptscriptstyle K}^{\scriptscriptstyle N} \rangle + R_{ \scriptscriptstyle K}^{\scriptscriptstyle N}(\mathbf{x}, \mathbf{r}),
\end{equation*}
where
\begin{equation*}
\lvert R_{ \scriptscriptstyle K}^{\scriptscriptstyle N}(\mathbf{x}, \mathbf{r}) \rvert\leq c(\kappa, \phi^\ast, K)\left(\sup_{t \in [a, b]} \| \mathbf{v}'''(t)\|_2+1\right)N^{-1},
\end{equation*}
provided that $N^{-3/4}<d_\delta/2$. Here, $C_{p_\mathbf{x}}$ is the spherical cap from \hyperref[lemma1]{Lemma~\ref*{lemma1}} with opening angle $\phi = \arcsin(\|\mathbf{v}''(p_\mathbf{x})\|_2^{-1})$ and $d(\mathbf{x}, \partial C_{p_\mathbf{x}}) = d(\mathbf{x}, \partial A)$. Hence, using \eqref{wavelets asympt4}, we obtain the desired formula.
\end{proof}

We conclude this section with the following remark.
\begin{remark}
The estimates in \autoref{thm1} and \autoref{thm2} hold, with minor modifications, also for $C^2$-curves. This can be shown by using exclusively great circles in the approximation of the boundary $\partial A$. In particular, this approach does not take into account the curvature, which leads to a less optimal upper bound of the form
\begin{equation*}
c\left( \sup_{t \in [a, b]} \| \mathbf{v}''(t) \|_2 + 1 \right)  r^{3}
\end{equation*}
in \eqref{eq9}. The verification of this variation of \hyperref[lemma1]{Lemma~\ref*{lemma1}} is analogous to the original proof and, in fact, it is easier. Using this bound, one obtains essentially the same estimates as stated in \autoref{thm1}, but now valid for all sets $A$ with a $C^2$-boundary. Of course, one has to replace  $\mathbf{v}'''$ in \eqref{wavelets asympt3} by $\mathbf{v}''$. However, in the resulting leading term of the asymptotic formula in \autoref{thm2} one has to replace $\| \mathbf{v}''(p_\mathbf{x})\|_2$ by $1$, as unit speed parameterizations of great circles always have constant curvature equal to $1$, which yields a less precise description of the wavelet coefficients. This is reflected in the remainder only being $\mathcal{O}(N^{-\varepsilon})$, $\varepsilon<1$, provided that $q\geq K+1+3\varepsilon(1-\varepsilon)^{-1}$.
\end{remark}

\section{Numerical example}\label{sec5}
In the previous section, we proved that jump discontinuities along smooth curves can be identified, in terms of position and orientation, by the asymptotic behavior of the corresponding frame coefficients in an arbitrary small neighborhood. By Parseval's theorem, for a given signal $f$ it holds that
\begin{equation}\label{eq21}
\langle f,  \mathcal{D}(\mathbf{x}, \mathbf{r}) \Psi_{ \scriptscriptstyle K}^{\scriptscriptstyle N} \rangle = \sum_{n=0}^{2N}\sum_{k=-n}^n \langle f, Y_n^k\rangle \, \overline{\langle \mathcal{D}(\mathbf{x}, \mathbf{r}) \Psi_{ \scriptscriptstyle K}^{\scriptscriptstyle N}, Y_n^k\rangle},
\end{equation}
i.e., the wavelet coefficients are linear combinations of the Fourier coefficients of $f$. Therefore, as a consequence of our main results, equation \eqref{eq21} provides a method to extract precise information about local phenomena using only the global quantities $\langle f, Y_n^k \rangle$. Moreover, \eqref{eq21} can be utilized to compute the wavelet coefficients in practice, where the Fourier coefficients of the signal under consideration are often available.
\begin{figure}
\centering
\includegraphics[ width=0.45\textwidth]{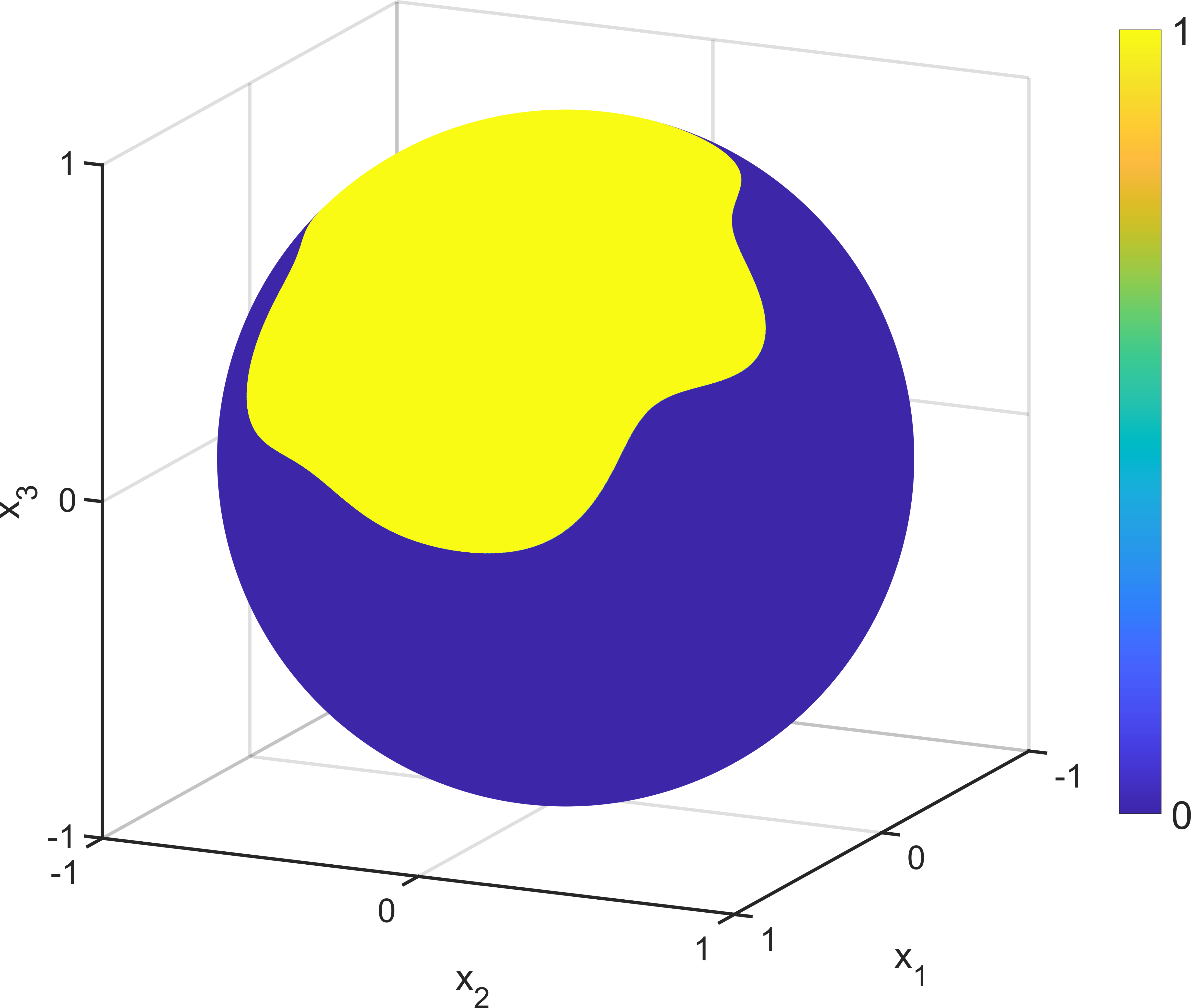}
\caption{Test signal $f = \mathbf{1}_A$}\label{Fig5}
\end{figure}

In this final section, we want to present some numerical experiments and compare the results to our theoretical findings. Let us consider the test signal $f = \mathbf{1}_A$, where $A\subset \mathbb{S}^2$ is the set visualized in \autoref{Fig5}. More precisely, $f(\mathbf{x}) = f_0([\mathbf{R}_{\mathbf{e}_2}(\pi/5)]^{-1}\mathbf{x})$ with
\begin{equation*}
f_0(\theta, \varphi) = \begin{cases}
1, \quad &\theta < g(\varphi),\\
0,  &\theta \geq g(\varphi)
\end{cases}
\end{equation*}
and
\begin{equation*}
g(\varphi) = \frac{7}{500}(20 \pi + 5\cos 2 \varphi + 5 \sin 5 \varphi - 2 \sin 7 \varphi).
\end{equation*}
Additionally we choose $\kappa$ in \eqref{directional wavelets} to be the $C^\infty$ function constructed in \citep{bib9} with $\text{supp}(\kappa) = [1/2, 2]$. As discussed in \citep{bib25}, it holds that
\begin{equation*}
\langle f, \mathcal{D}(\alpha, \beta, \gamma)\Psi_{\scriptscriptstyle K}^{\scriptscriptstyle N} \rangle  = \sum_{n=0}^{2N} \sum_{k=-n}^n \sum_{k'=-n}^n \langle f, Y_n^k \rangle \, \overline{\langle\Psi_{\scriptscriptstyle K}^{\scriptscriptstyle N} , Y_n^{k'} \rangle} \, \overline{D_{k, k'}^n(\alpha, \beta, \gamma)},
\end{equation*}
in terms of the Wigner $D$-functions $D_{k, k'}^n$ and with respect to the Euler angle parameterization of the rotation group. The above sum constitutes the Fourier synthesis of a band-limited function on the $SO(3)$ and thus can be evaluated at arbitrary points $(\alpha_j, \beta_j, \gamma_j)$, $j=1, ..., M$, using fast algorithms. However, like in many practical situations, the exact Fourier coefficients of $f$ are unknown and have to be approximated using only finitely many samples. In our setting, this method inevitably leads to a certain inaccuracy since the signal is not compactly supported in frequency space. 

\begin{figure}
\centering
\includegraphics[ width=0.95\textwidth]{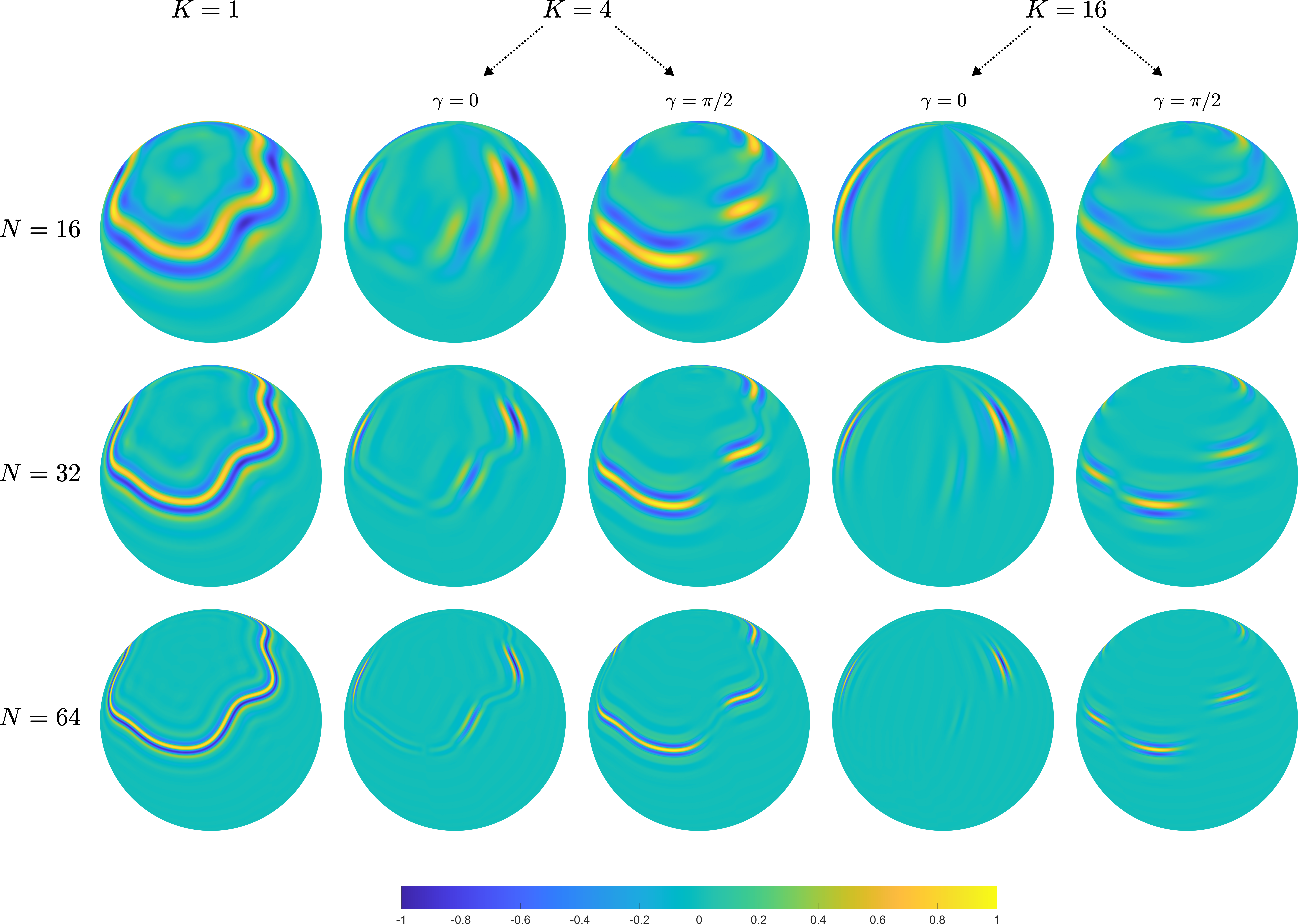}
\caption{Approximate wavelet coefficients $(\beta, \alpha) \mapsto \hat{W}_f^{\scriptscriptstyle K, N}(\alpha, \beta, \gamma)$ for different values $N, K$ and $\gamma$}\label{Fig6}
\end{figure}
\begin{figure}
\center
\includegraphics[width=1\textwidth]{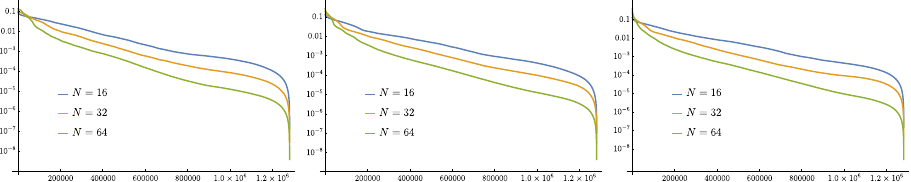}
\caption{Approximate coefficients $\lvert \hat{W}_f^{\scriptscriptstyle K, N}(\alpha_j, \beta_j, \pi/2)\rvert$ plotted in descending order for $K=1$ (left), $K=4$ (middle) and $K=16$ (right)  }\label{Fig7}
\end{figure}

For our experiment, we approximated the integrals $\langle f, Y_n^k\rangle$ by values $\hat{f}_{n, k}$ resulting from a Gauß-Legendre quadrature rule which is exact for spherical polynomials up to degree $2048$. Subsequently, we computed the approximate wavelet coefficients
\begin{equation*}
\hat{W}_f^{\scriptscriptstyle K, N}(\alpha_j, \beta_\ell, \gamma) = \sum_{n=0}^{2N} \sum_{k=-n}^n \sum_{k'=-n}^n \hat{f}_{n, k} \, \overline{\langle\Psi_{\scriptscriptstyle K}^{\scriptscriptstyle N} , Y_n^{k'} \rangle} \, \overline{D_{k, k'}^n(\alpha_j, \beta_\ell, \gamma)},
\end{equation*}
with
\begin{equation*}
\alpha_j = \frac{j \pi}{M}, \; j=1, ..., 2M,\quad \beta_\ell = \frac{\ell \pi}{M}, \; \ell = 1, ..., M,  \quad \gamma \in \{0, \pi/2 \}, \quad M=800,
\end{equation*}
for different values of $K$ and $N$. All computations were done in \texttt{Matlab} using the \texttt{NFFT 3} library \citep{bib26}. In \autoref{Fig6}, the data points $\hat{W}_f^{\scriptscriptstyle K, N}(\alpha_j, \beta_\ell, \gamma)$, $\gamma \in \{0, \pi/2 \}$, are visualized as functions on $\mathbb{S}^2$. Here, for the sake of clarity, all images have been re-scaled to take values between $-1$ and $1$. The magnitudes of the frame coefficients are given in \autoref{Fig7}, where the absolute values $\lvert\hat{W}_f^{\scriptscriptstyle K, N}(\alpha_j, \beta_\ell, \gamma)\rvert$ are plotted in descending order. It is immediately noticeable that the coefficients $\hat{W}_f^{\scriptscriptstyle K, N}$ become more and more concentrated along the boundary $\partial A$ as $N$ increases. Moreover, the peaks remain stable in their magnitude and shape and, essentially, appear to just get dilated for larger values of $N$. The directional sensitivity increases with $K$ and the area of the boundary getting detected changes with different orientations of the wavelets. Also, the parameter $N$ does not seem to affect the directional sensitivity. Hence, our observations in this numerical example are in accordance with the characteristic properties of the exact wavelet coefficients displayed in \autoref{thm1} and \autoref{thm2}.

\bibliographystyle{elsarticle-harv}
\bibliography{mybib}

\end{document}